\theoremstyle{definition}
\newtheorem{definition}{Definition}[section]
\newtheorem{example}[definition]{Example}
\newtheorem{remark}[definition]{Remark}
\newtheorem{pumprule}{Pumping rule}
\newtheorem{shiftrule}{Shift rule}
\theoremstyle{plain}
\newtheorem{proposition}[definition]{Proposition}
\newtheorem{lemma}[definition]{Lemma}
\newtheorem{theorem}[definition]{Theorem}
\numberwithin{equation}{section}
\def\fullref#1#2{%
  \ifdefined\hyperref%
    {\hyperref[#2]{#1 \penalty 200\relax\ref*{#2}}}%
  \else%
    {#1 \penalty 200\relax\ref{#2}}%
  \fi%
}
\newcommand{\defterm}[1]{\textit{#1}}
\newcommand{\fsa}[1]{\mathfrak{#1}}
\newcommand{\tm}[1]{\mathfrak{#1}}
\newcommand{\nset}{\mathbb{N}}
\newcommand{\zset}{\mathbb{Z}}
\newcommand{\emptyword}{\varepsilon}
\newcommand{\conv}{\mathrm{conv}}
\newcommand{\inpargraphic}[1]{\par\medskip\centerline{\includegraphics{#1}}\medskip\noindent\ignorespaces}
\begin{document}

\title{Subalgebras of FA-pre\-sent\-a\-ble algebras}
\author{Alan J. Cain \& Nik Ru\v{s}kuc}
\date{}

\thanks{The first author's research was funded by the European
  Regional Development Fund through the programme {\sc COMPETE} and by
  the Portuguese Government through the {\sc FCT} (Funda\c{c}\~{a}o
  para a Ci\^{e}ncia e a Tecnologia) under the project {\sc
    PEst-C}/{\sc MAT}/{\sc UI0}144/2011 and through an {\sc FCT}
  Ci\^{e}ncia 2008 fellowship. Two visits by the first author to the
  University of St Andrews, where much of the research described in
  this paper was carried out, were supported by the {\sc EPSRC}-funded
  project {\sc EP}/{\sc H0}11978/1 `Automata, Languages, Decidability
  in Algebra'. The authors thank Richard M. Thomas for helpful discussions.}

\maketitle

\address[AJC]{%
Centro de Matem\'{a}tica, Faculdade de Ci\^{e}ncias, Universidade do Porto, \\
Rua do Campo Alegre 687, 4169--007 Porto, Portugal
}
\email{%
ajcain@fc.up.pt
}
\webpage{%
www.fc.up.pt/pessoas/ajcain/
}

\address[NR]{%
School of Mathematics and Statistics, University of St Andrews, \\
North Haugh, St Andrews, Fife KY16 9SS, United Kingdom
}
\email{%
nik@mcs.st-andrews.ac.uk
}
\webpage{%
turnbull.mcs.st-and.ac.uk/~nik/
}

\begin{abstract}
Automatic presentations, also called FA-pre\-sent\-a\-tions, were
introduced to extend finite model theory to infinite structures whilst
retaining the solubility of fundamental decision problems. This paper
studies FA-pre\-sent\-a\-ble algebras. First, an example is given to
show that the class of finitely generated FA-pre\-sent\-a\-ble
algebras is not closed under forming finitely generated subalgebras,
even within the class of algebras with only unary operations. However,
it is proven that a finitely generated subalgebra of an FA-pre\-sent\-a\-ble
algebra with a single unary operation is itself
FA-pre\-sent\-a\-ble. Furthermore, it is proven that the class of unary
FA-pre\-sent\-a\-ble algebras is closed under forming finitely
generated subalgebras, and that the membership problem for such
subalgebras is decidable.
\end{abstract}

\section{Introduction}

Automatic presentations, also known as FA-pre\-sent\-a\-tions, were
introduced by Khoussainov \& Nerode \cite{khoussainov_autopres} to
fulfill a need to extend finite model theory to infinite structures
while retaining the solubility of interesting decision
problems. Informally, an FA-pre\-sent\-a\-tion for a relational
structure consists of a regular language of abstract representatives
for elements of the structure such that the relations of the structure
can be recognized by synchronous finite
automata. FA-pre\-sent\-a\-tions have been considered for structures
such as orders
\cite{khoussainov_autopo,khoussainov_linearorders,delhomme_autopresordinal},
graphs \cite{khoussainov_unarygraphalgorithmic}, and groups,
semigroups, and rings
\cite{oliver_autopresgroups,cort_apsg,nies_rings}.

This paper studies subalgebras of FA-pre\-sent\-a\-ble algebras. In
the particular case of groups, it was already known that there exists
an FA-pre\-sent\-a\-tion for the group $\zset \times \zset$ under
which the sublanguage of representatives for elements of any
non-trivial cyclic subgroups is not regular
\cite[\S~6]{nies_autopresabel}. However, such subgroups, like all
abelian groups \cite[Theorem~3]{oliver_autopresgroups}. are
FA-pre\-sent\-a\-ble with a different language of
representatives. We construct an example of a finitely generated
FA-pre\-sent\-a\-ble algebra that contains a non-FA-pre\-sent\-a\-ble
finitely generated subalgebra
(\fullref{Example}{ex:nonfasubalgebra}). This shows that the class of
FA-pre\-sent\-a\-ble algebras is not closed under taking finitely
generated subalgebras. Furthermore, this non-closure holds even within
the class of algebras equipped with only unary operations
(\fullref{Remark}{rem:evenunary}). However, the class of
FA-pre\-sent\-a\-ble algebras with a single unary operation is closed
under forming finitely generated subalgebras
(\fullref{Proposition}{prop:oneunaryop}).

On the other hand, we prove that the class of algebras that admit
unary FA-pre\-sent\-a\-tions (that is, FA-pre\-sent\-a\-tions over a
one-letter alphabet) \emph{is} closed under forming finitely generated
subalgebras (\fullref{Theorem}{thm:subalgebras}). The proof depends on
the sublanguage of representatives for elements of the subalgebra
being regular and effectively constructible
(\fullref{Theorem}{thm:subalgebralang}), which also implies that the
membership problem is decidable for such subalgebras
(\fullref{Theorem}{thm:membership}). We also prove that finitely
generated unary FA-pre\-sent\-a\-ble algebras have growth level
bounded by a linear function (\fullref{Proposition}{prop:unaryalggrowth}).

These results for unary FA-pre\-sent\-a\-tions are proved using a new
diagrammatic representation, developed in
\fullref{\S}{sec:pumping}. This representation allows us to visualize
and manipulate elements of a unary FA-pre\-sent\-a\-ble relational
structure in a way that is more accessible than the corresponding
arguments using languages and automata. In a forthcoming paper
\cite{cr_binrel}, we deploy this representation in an analysis of
unary FA-presentable binary relations. This representation is thus
potentially a unifying framework in which to reason about unary
FA-pre\-sent\-a\-ble algebraic and relational structures.

\section{Preliminaries}

The reader is assumed to be familiar with the theory of finite
automata and regular languages; see \cite[Chs~2--3]{hopcroft_automata}
for background reading. The empty word (over any alphabet) is denoted
$\emptyword$.

\begin{definition}
Let $L$ be a regular language over a finite alphabet $A$. Define, for $n \in \nset$,
\[
L^n = \{(w_1,\ldots,w_n) : w_i \in L\text{ for $i=1,\ldots,n$}\}.
\]
Let $\$$ be a new symbol not in $A$. The mapping $\conv : (A^*)^n\to ((A\cup\{\$\})^n)^*$ is defined as follows. Suppose
\begin{align*}
w_1 &= w_{1,1}w_{1,2}\cdots w_{1,m_1},\\
w_2 &= w_{2,1}w_{2,2}\cdots w_{2,m_2},\\
&\vdots\\
w_n &= w_{n,1}w_{n,2}\cdots w_{n,m_n},
\end{align*}
where $w_{i,j} \in A$. Then $\conv(w_1,\ldots,w_n)$ is defined to be
\[
(w_{1,1},w_{2,1},\ldots,w_{n,1})(w_{1,2},w_{2,2},\ldots,w_{n,2})\cdots (w_{1,m},w_{2,m},\ldots,w_{n,m}),
\]
where $m = \max\{m_i:i=1,\ldots,n\}$ and with $w_{i,j} = \$$ whenever
$j > m_i$.
\end{definition}

Observe that the mapping $\conv$ maps an $n$-tuple of words to a word of $n$-tuples.

\begin{definition}
Let $A$ be a finite alphabet, and let $R \subseteq (A^*)^n$ be a
relation on $A^*$. Then the relation $R$ is said to be \defterm{regular} if
\[
\conv R = \{\conv(w_1,\ldots,w_n) : (w_1,\ldots,w_n) \in R\}
\]
is a regular language over $(A\cup\{\$\})^n$.
\end{definition}

\begin{definition}
Let $\mathcal{S}=(S,R_1,\ldots,R_n)$ be a relational structure. Let
$L$ be a regular language over a finite alphabet $A$, and let $\phi :
L\rightarrow S$ be a surjective mapping. Then $(L,\phi)$ is an
\defterm{automatic presentation} or an \defterm{FA-pre\-sent\-a\-tion}
for $\mathcal{S}$ if, for all relations $R \in \{=,R_1,\ldots,R_n\}$
of relation
\[
\Lambda(R,\phi)=\{(w_1,w_2,\ldots,w_{r_i})\in L^{r_i}:R(w_1\phi,\ldots,w_{r}\phi)\},
\]
where $r$ is the arity of $R$, is regular.

If $\mathcal{S}$ admits an FA-pre\-sent\-a\-tion, it is said to be
\defterm{FA-pre\-sent\-a\-ble}.

If $(L,\phi)$ is an FA-pre\-sent\-a\-tion for $\mathcal{S}$ and the
mapping $\phi$ is injective (so that every element of the structure
has exactly one representative in $L$), then $(L,\phi)$ is said to be
\defterm{injective}.

If $(L,\phi)$ is an FA-pre\-sent\-a\-tion for $\mathcal{S}$ and $L$ is
a language over a one-letter alphabet, then $(L,\phi)$ is a
\defterm{unary} FA-pre\-sent\-a\-tion for $\mathcal{S}$, and
$\mathcal{S}$ is said to be \defterm{unary FA-pre\-sent\-a\-ble}.
\end{definition}

Every FA-pre\-sent\-a\-ble structure admits an injective \defterm{binary}
FA-pre\-sent\-a\-tion; that is, where the language of representatives is
over a two-letter alphabet; see
\cite[Corollary~4.3]{khoussainov_autopres} and
\cite[Lemma~3.3]{blumensath_diploma}. Therefore the class of binary
FA-pre\-sent\-a\-ble structures is simply the class of FA-pre\-sent\-a\-ble
structures. However, there are many structures that admit
FA-pre\-sent\-a\-tions but not unary FA-pre\-sent\-a\-tions: for instance, any
finitely generated virtually abelian group is FA-pre\-sent\-a\-ble
\cite[Theorem~8]{oliver_autopresgroups}, but unary FA-pre\-sent\-a\-ble
groups must be finite \cite[Theorem~7.19]{blumensath_diploma}. Thus
there is a fundamental difference between unary FA-pre\-sent\-a\-ble
structures and all other FA-pre\-sent\-a\-ble
structures.

The fact that a tuple of elements $(s_1,\ldots,s_n)$ of a structure $\mathcal{S}$
satisfies a first-order formula $\theta(x_1,\ldots,x_n)$ is denoted $\mathcal{S}
\models \theta(s_1,\ldots,s_n)$. 

\begin{proposition}[{\cite[Theorem~4.4]{khoussainov_autopres}}]
\label{prop:firstorderreg}
Let $\mathcal{S}$ be a structure with an FA-pre\-sent\-a\-tion $(L,\phi)$. For
every first-order formula $\theta(x_1,\ldots,x_n)$ over the structure, the relation
\[
\Lambda(\theta,\phi) = \bigl\{(w_1,\ldots,w_n) \in L^n : \mathcal{S} \models\theta(w_1\phi,\ldots,w_n\phi)\bigr\}
\]
is regular, and an automaton recognizing it can be effectively
constructed.
\end{proposition}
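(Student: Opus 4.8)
The plan is to argue by induction on the structure of the first-order formula $\theta$, the backbone being the fact that the class of regular relations over $A^*$ is effectively closed under Boolean operations, under rearrangements of their argument positions, and under projection. Once these closure properties are established as automaton constructions, each logical connective and quantifier corresponds to exactly one of them, and the base case is supplied directly by the definition of an FA-pre\-sent\-a\-tion.

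For the base case I would take $\theta$ to be an atomic formula $R(x_{i_1},\ldots,x_{i_r})$ built from one of the basic relations of $\mathcal{S}$ (or from equality), where the indices $i_1,\ldots,i_r$ lie in $\{1,\ldots,n\}$ and may repeat or occur out of order. By the definition of an FA-pre\-sent\-a\-tion the relation $\Lambda(R,\phi)$ is regular, and $\Lambda(\theta,\phi)$ is obtained from it by three effective operations on synchronous automata: permuting coordinates (to match the order of the $x_{i_j}$), identifying coordinates (when a variable occurs more than once), and adjoining dummy coordinates ranging over all of $L$ (for those of $x_1,\ldots,x_n$ not appearing in $\theta$). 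Each such operation preserves regularity and can be carried out explicitly.

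For the inductive step I would treat the connectives and the existential quantifier; the universal quantifier then follows since $\forall x_{n+1}$ may be rewritten as $\neg\exists x_{n+1}\neg$. For $\theta = \neg\psi$ one sets $\Lambda(\theta,\phi) = L^n \setminus \Lambda(\psi,\phi)$, which is regular because $\conv(L^n)$ is regular and one may intersect the complement of $\conv\Lambda(\psi,\phi)$ with this set of well-formed convolutions of elements of $L^n$. For $\theta = \psi_1 \wedge \psi_2$ and $\theta = \psi_1 \vee \psi_2$ one first pads each $\Lambda(\psi_k,\phi)$ up to the common variable list $x_1,\ldots,x_n$ by adjoining dummy coordinates, and then intersects or unions the resulting regular relations. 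Finally, for $\theta = \exists x_{n+1}\,\psi$ the relation $\Lambda(\psi,\phi)\subseteq L^{n+1}$ is regular by induction, and $\Lambda(\theta,\phi)$ is its projection onto the first $n$ coordinates, obtained by taking an automaton for $\conv\Lambda(\psi,\phi)$ over $(A\cup\{\$\})^{n+1}$ and deleting the last component from every letter.

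The step I expect to be the main obstacle is precisely this projection. Deleting the last component of each letter produces an automaton that accepts not $\conv\Lambda(\theta,\phi)$ itself but rather those words followed by arbitrarily many trailing columns all equal to $\$$, which arise whenever the quantified word $w_{n+1}$ is strictly longer than each of $w_1,\ldots,w_n$. The real work lies in stripping these spurious trailing padding columns so as to recover exactly the well-formed convolutions of the projected tuples; this is a standard but slightly delicate manipulation, closing under and then deleting trailing all-$\$$ columns while intersecting with the regular language $\conv(L^n)$. Since every construction invoked is an effective operation on finite automata, the automaton for $\Lambda(\theta,\phi)$ can be built explicitly, which yields the effectiveness claim.
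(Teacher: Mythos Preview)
The paper does not actually prove this proposition: it is quoted from Khoussainov and Nerode \cite[Theorem~4.4]{khoussainov_autopres} and used as a black box, so there is no in-paper proof to compare against. Your structural-induction argument is correct and is essentially the standard proof one finds in the cited source, including the care you take over the trailing-$\$$ issue in the projection step; there is nothing to add.
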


\fullref{Proposition}{prop:firstorderreg} is fundamental to the theory
of FA-pre\-sent\-a\-tions and will be used without explicit reference
throughout the paper.

The following important result shows that in the case of unary
FA-pre\-sent\-a\-tions for infinite structures, we can assume that the
language of representatives is the language of \emph{all} words over a
one letter alphabet:

\begin{theorem}[{\cite[Theorem~3.1]{crt_unaryfa}}]
\label{thm:allwords}
Let $\mathcal{S}$ be an infinite relational structure that admits a unary
FA-pre\-sent\-a\-tion. Then
$\mathcal{S}$ has an injective unary FA-pre\-sent\-a\-tion
$(a^*,\psi)$.
\end{theorem}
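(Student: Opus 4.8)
The plan is to start from an arbitrary unary FA-pre\-sent\-a\-tion $(L,\phi)$ of $\mathcal{S}$ and to massage it in two stages: first replace it by an \emph{injective} unary presentation, and then reparametrize the domain so that the language of representatives becomes all of $a^*$. Throughout, the guiding principle is that if $\gamma : a^* \to L$ is a bijection that is compatible with regularity --- precisely, such that for every regular relation $T$ on $L$ the pulled-back relation $\{(a^{k_1},\dots,a^{k_r}) : (a^{k_1}\gamma,\dots,a^{k_r}\gamma)\in T\}$ is again regular --- then setting $\psi = \gamma\phi$ yields an injective unary presentation $(a^*,\psi)$. The whole problem thus reduces to exhibiting one regularity-preserving bijection $a^*\to L$.

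\emph{Stage one: injectivity.} Since $\mathcal{S}$ is infinite and $\phi$ is surjective, $L$ is infinite. The equality relation $\Lambda({=},\phi)\subseteq L\times L$ is regular, and the length order $\{(a^k,a^{k'}) : k<k'\}$ is regular, its convolution being $(a,a)^*(\$,a)(\$,a)^*$. Hence the set of words of $L$ that are \emph{not} shortest in their $\phi$-class is regular --- intersect the two relations and project --- and removing it leaves a regular language $L'\subseteq a^*$ on which $\phi$ is injective. Over a one-letter alphabet the shortest word in a class is unique, so $\phi|_{L'}$ is a bijection onto $\mathcal{S}$. We may therefore assume from the outset that $(L,\phi)$ is injective.

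\emph{Stage two: structure of $L$ and reparametrization.} An infinite regular language over $\{a\}$ is ultimately periodic: there are a threshold $t$, a period $p$, a nonempty set of recurring residues $R=\{r_1<\dots<r_d\}\subseteq\{0,\dots,p-1\}$, and a finite exceptional set $F$ such that $L=\{a^n : n\in F\}\cup\{a^{t+r_i+pj} : 1\le i\le d,\ j\ge 0\}$. I would define the bijection $\gamma:a^*\to L$ by reserving the $|F|$ shortest words for the exceptional elements and then splitting the remaining indices according to their residue modulo $d$: the word $a^{|F|+dj+i}$ with $0\le i<d$ is sent to $a^{t+r_{i+1}+pj}$. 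In other words $\gamma$ acts, on each residue class modulo $d$, as an affine map onto one of the $d$ arithmetic progressions comprising the recurring part of $L$, up to a finite initial adjustment.

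The heart of the proof, and the step I expect to be the main obstacle, is verifying that this $\gamma$ is regularity-preserving. The naive alternative --- the monotone bijection listing $L$ in increasing order of length --- \emph{fails}: when $d<p$ it stretches lengths by the non-integer factor $p/d$, and the induced reparametrization does not preserve regularity (for instance the graph of $a^k\mapsto a^{2k}$ has the non-regular convolution $(a,a)^k(\$,a)^k$). The residue-respecting map above avoids this because, restricted to each class modulo $d$, it is an honest integer-affine map $k\mapsto pk+\text{const}$, realized on convolutions by duplicating each column $p$ times and padding by a constant, operations that preserve regularity. The verification therefore proceeds by a direct automaton construction on convolution languages: one builds an automaton that tracks in its finite control the residues of the track lengths modulo $d$, simulates the column-duplication and padding appropriate to each track, and thereby transforms an automaton for $T$ into one for its pullback. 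The finite exceptional set $F$ and the constant shift by $|F|$ contribute only finite modifications and a fixed padding, both of which preserve regularity, so once the residue-respecting reparametrization is shown to be regularity-preserving the theorem follows.
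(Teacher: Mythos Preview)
This theorem is quoted from \cite{crt_unaryfa} and the present paper gives no proof of it, so there is nothing here to compare your argument against. On its own merits your two-stage outline is sound and would succeed with the details filled in, but it contains a confusion worth flagging.

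Your ``residue-respecting'' bijection and the monotone bijection you reject are \emph{the same map}: since $r_1 < \cdots < r_d < p$, the assignment $dj+i \mapsto pj + r_{i+1}$ is strictly increasing, and a strictly increasing bijection from $\nset$ onto the recurring part of $L$ is unique --- it is the monotone enumeration. Your stated reason for rejecting the monotone map, that the graph of $a^k \mapsto a^{2k}$ has non-regular convolution, is true but irrelevant: what you need is that $\gamma^{-1}$ carries regular relations to regular relations, not that $\gamma$ itself has regular graph, and this weaker property does hold. Relatedly, the assertion that the map is ``$k \mapsto pk + \text{const}$'' on each residue class is wrong as written; in terms of the input length $k$ the slope is $p/d$, not $p$ (it is only in terms of the quotient $j$ that the slope is $p$).

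Your automaton sketch is nonetheless essentially correct. The point that makes it work --- and which the phrase ``appropriate to each track'' obscures --- is that in any word of the form $\conv(a^{k_1},\ldots,a^{k_r})$, at each position all tracks still carrying the symbol $a$ have been advanced the same number of steps and therefore share a common residue modulo $d$; hence the number of steps by which to advance the simulated automaton is determined by that single common residue, not separately per track. With this made explicit, together with an initial offset in the simulated automaton to account for $g(0) = t + r_1$ and a finite case analysis for tuples with components in the exceptional set $F$, the construction goes through.
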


\section{Subalgebras of FA-pre\-sent\-a\-ble algebras}
\label{sec:fapalgebras}

This section presents various contrasting results for finitely
generated subalgebras of FA-pre\-sent\-a\-ble subalgebras. First, a
positive result is proven: the class of FA-pre\-sent\-a\-ble algebras
whose signature consists of a single unary operation is closed under
forming finitely generated subalgebras
(\fullref{Proposition}{prop:oneunaryop}). Then we exhibit an example
of a finitely generated FA-pre\-sent\-a\-ble algebra that admits a
non-FA-pre\-sent\-a\-ble finitely generated subalgebra
(\fullref{Example}{ex:nonfasubalgebra}). Although this example algebra
is equipped with a binary operation, we note afterwards how it can be
modified into an algebra with only unary operations
(\fullref{Remark}{rem:evenunary}), thus contrasting our previous
positive result.

Although this section shows that the class of FA-pre\-sent\-a\-ble
algebras is not closed under forming finitely generated
subalgebras. However, closure under forming finitely generated
subalgebras may hold within classes of FA-pre\-sent\-a\-ble algebras of
a particular type. For instance, the following result holds:

\begin{proposition}
Every finitely generated subgroup of an FA-pre\-sent\-a\-ble group is
FA-presentable.
\end{proposition}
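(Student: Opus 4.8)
The plan is to deduce the proposition from the algebraic characterization of finitely generated FA-pre\-sent\-a\-ble groups. By a theorem of Oliver and Thomas, a finitely generated group is FA-pre\-sent\-a\-ble if and only if it is virtually abelian; the backwards implication is the one recorded above as \cite[Theorem~8]{oliver_autopresgroups}. Thus, writing $G$ for the FA-pre\-sent\-a\-ble group and $H = \langle h_1,\ldots,h_n\rangle$ for a finitely generated subgroup, it suffices to prove that $H$ is virtually abelian, since FA-pre\-sent\-a\-bility will then follow from this characterization. Note that one cannot simply inherit a presentation from $G$: as remarked in the introduction, the representatives of the elements of even a cyclic subgroup may fail to form a regular sublanguage, so $H$ must be given a presentation of its own, which is exactly what knowing its isomorphism type to be virtually abelian supplies.

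To establish virtual abelianness I would work inside a fixed FA-pre\-sent\-a\-tion $(L,\phi)$ for $G$, which may be taken to be injective. The graph of multiplication, $\{(\bar a,\bar b,\overline{ab}) : a,b \in G\}$, is regular; fixing the second coordinate to a representative of $h_i$ shows that, for each generator, the relation $E_i = \{(w,w') \in L\times L : (w\phi)h_i = w'\phi\}$ is regular. The elements of $H$ are precisely those reachable from a representative of the identity by following finitely many of the edges $E_i^{\pm 1}$, so the Cayley graph of $H$ with respect to $h_1,\ldots,h_n$ is encoded by these regular relations. The idea is then to bound the growth of $H$: a pumping argument on the synchronous automata recognizing the $E_i$ should show that the size of each ball in the word metric on $H$ grows only polynomially. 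By Gromov's theorem $H$ is then virtually nilpotent, and a further argument ruling out genuinely non-abelian behaviour upgrades this to virtual abelianness.

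The main obstacle is this growth estimate. The difficulty is that the representatives in $L$ of elements of $H$ need not form a regular language, so one cannot read off the growth of $H$ from the lengths of representatives directly; instead the bound must be extracted from the lockstep behaviour of the automata recognizing the $E_i$, controlling how far the length of a representative can drift under a single multiplication. This is precisely where the structural restrictions imposed by FA-pre\-sent\-a\-bility enter, and where the argument is most delicate. Granting the growth bound, the deduction that $H$ is virtually abelian, and hence, being finitely generated, FA-pre\-sent\-a\-ble, is then immediate from \cite[Theorem~8]{oliver_autopresgroups}.
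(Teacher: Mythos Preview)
Your overall strategy matches the paper's: reduce to showing that $H$ is virtually abelian, then invoke the Oliver--Thomas construction. The paper, however, disposes of the first step in one line by citing \cite[Theorem~10(i)]{nies_rings}, which states directly that every finitely generated subgroup of an FA-presentable group is virtually abelian. You are, in effect, proposing to reprove that theorem from scratch.

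Your sketch of that reproof has a real gap. The claim that a pumping argument on the relations $E_i$ yields a \emph{polynomial} bound on the growth of balls in $H$ is not justified, and the naive version fails. Pumping only tells you that multiplication by a fixed generator changes the length of a representative by at most a constant $c$, so elements of the $k$-ball have representatives of length $O(k)$; but over an alphabet of size $s\geq 2$ there are exponentially many such words, and you have no regular description of which of them actually represent elements of $H$ (indeed, as the introduction notes, that set of representatives need not be regular). Extracting polynomial growth from FA-presentability genuinely requires further ideas---compare \cite[Theorem~7.4]{cort_apsg} for semigroups, or the arguments in \cite{nies_rings} for groups. The subsequent ``further argument'' upgrading virtually nilpotent to virtually abelian is also nontrivial: one must rule out, for instance, the discrete Heisenberg group, and this is again part of what \cite{nies_rings} establishes rather than something that drops out of Gromov's theorem.

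So: right destination, right final step (virtual abelianness plus \cite[Theorem~3]{oliver_autopresgroups}), but the engine you propose to build is exactly the content of the cited theorem, and your outline of its construction is incomplete at precisely the points where it is hard.
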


\begin{proof}
A finitely generated subgroup of an FA-presentable group is virtually
abelian by \cite[Theorem~10(i)]{nies_rings} and hence
FA-pre\-sent\-a\-ble by \cite[Theorem~3]{oliver_autopresgroups}.
\end{proof}

\begin{proposition}
\label{prop:oneunaryop}
Let $\mathcal{S} = (S,\sigma)$ be an algebra, where $\sigma$ is a
unary operation. Then every finitely generated subalgebra of
$\mathcal{S}$ is FA-pre\-sent\-a\-ble.
\end{proposition}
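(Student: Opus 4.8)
The plan is to exploit the very restricted structure of a finitely generated algebra with a single unary operation, and then to present it explicitly. Let $X = \{x_1,\dots,x_k\}$ be a finite generating set for a subalgebra $\mathcal{T}$ of $\mathcal{S}$. Since $\sigma$ is the only operation, the subalgebra is simply the union of the forward orbits of the generators:
\[
T = \{\sigma^n(x_i) : n \geq 0,\ 1 \leq i \leq k\}.
\]
I would first regard $(T,\sigma)$ as a functional graph, in which each vertex $y$ has the single out-edge $y \mapsto \sigma(y)$, and analyse its connected components. Because every element of $T$ lies in the forward orbit of some generator, each component contains a generator, so there are at most $k$ components; moreover each component is $\sigma$-closed, so $\mathcal{T}$ is the disjoint union, as an algebra, of these components.

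Next I would classify the components into two types. If a component contains a cycle, then it consists of that cycle together with the finitely many finite tails running from the generators it contains into the cycle; hence it is \emph{finite}. If a component is acyclic, then each generator it contains has an infinite forward orbit, and the crucial structural claim is that these finitely many orbits eventually coincide, merging into a single infinite ray $r_0,\, r_1 = \sigma(r_0),\, r_2 = \sigma(r_1),\dots$ of distinct points, with only finitely many further points (the initial parts of the generator tails, before they reach the ray) lying off the ray. Thus each infinite component is an infinite ray decorated by a finite set. I expect \emph{this merging claim to be the main obstacle}: the argument I have in mind is that ``having eventually coinciding forward orbits'' is an equivalence relation on $T$ that identifies $y$ with $\sigma(y)$, and is therefore constant on each connected component, which forces together all the generator orbits within a component.

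It then remains to present each type of component and to combine them. A finite component is FA-pre\-sent\-a\-ble because every finite structure is. An infinite component $\{p_1,\dots,p_s\} \cup \{r_0,r_1,r_2,\dots\}$ (decoration plus ray) admits the injective presentation with language $\{c_1,\dots,c_s\} \cup a^*$ over the alphabet $\{a,c_1,\dots,c_s\}$, sending $c_i \mapsto p_i$ and $a^n \mapsto r_n$: the graph of $\sigma$ then splits as the regular relation $\{(a^n,a^{n+1}) : n \geq 0\}$ together with the finite, hence regular, set of pairs $(c_i,\text{representative of }\sigma(p_i))$, while equality is the diagonal. Finally, I would assemble the at most $k$ component presentations into a single one by tagging each representative with a letter recording its component (using a distinct ray letter for each infinite component); since no $\sigma$-edge crosses between components, both equality and the graph of $\sigma$ remain regular, and this yields an FA-pre\-sent\-a\-tion for $\mathcal{T}$. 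Note that this argument nowhere uses FA-pre\-sent\-a\-bility of $\mathcal{S}$, in keeping with the statement.
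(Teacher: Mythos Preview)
Your argument is correct and is close in spirit to the paper's proof, though organised differently. The paper proceeds by induction on the number of generators: given an FA-presentation for the subalgebra generated by $x_1,\dots,x_{i-1}$, it examines the orbit of $x_i$ and distinguishes three cases --- the orbit meets the already-presented part, the orbit is disjoint but eventually periodic, or the orbit is disjoint and injective --- and in each case extends the presentation by a finite set of new symbols or by a fresh copy of $a^*$. Your proof instead carries out the structural analysis globally: you decompose the functional graph of $(T,\sigma)$ into weakly connected components, show (via the equivalence relation ``forward orbits eventually meet'') that each acyclic component is a single ray with finitely many extra points, and then present each component directly. The two arguments produce essentially the same presentation; your version makes the shape of $(T,\sigma)$ more explicit up front, while the paper's induction avoids having to state and justify the merging claim separately. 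Both correctly observe that no hypothesis on $\mathcal{S}$ is needed.
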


\begin{proof}
Let $X = \{x_1,\ldots,x_n\}$ be a finite subset of $S$ and let
$\mathcal{T}$ be the subalgebra of $\mathcal{S}$ that $X$
generates. We will inductively define FA-presentations
$(L_i,\phi_i)$ for the subalgebras $\mathcal{T}_i$ of $\mathcal{S}$
generated by $X_i = \{x_1,\ldots,x_i\}$. To avoid having to treat the
case $i=1$ separately, formally define $L_0$ to be the empty language,
$\phi_0$ to be the empty map and $\Lambda(\sigma,\phi_0)$ to be the
empty relation.

So suppose we have an FA-presentation $(L_{i-1},\phi_{i-1})$ for the
subalgebra $\mathcal{T}_i$ generated by $X_{i-1}$, where $i \in
\{1,\ldots,n\}$. Notice that the subalgebra generated by $X_i$
consists of elements in the set $L_{i-1}\phi_{i-1} \cup \{x_i\sigma^k : k
\in \nset^0\}$. There are three cases to consider:
\begin{enumerate}

\item For some $k \in \nset^0$, the element $x_i\sigma^k$ lies in
  $L_{i-1}\phi_{i-1}$, with $u\phi_{i-1} = x_i\sigma^k$. In this case, let
\[
L_i = L_{i-1} \cup \bigl\{p_{i,j} : j \in \{0,\ldots,k-1\}\bigr\};
\]
then $L_i$ is regular. Define $\phi_i$ by $\phi_i|_{L_{i-1}} =
\phi_{i-1}$ and $p_{i,j}\phi_i = x_i\sigma^j$ for $j \in
\{0,\ldots,k-1\}$. Then
\begin{align*}
\Lambda(\sigma,\phi_i) ={}& \Lambda(L_{i-1},\phi_{i-1}) \\
&\cup \bigl\{(p_{i,j},p_{i,j+1}) : j \in \{0,\ldots,k-2\}\bigr\} \cup \{(p_{i,k-1},u)\}
\end{align*}
is a union of regular relations and is thus regular.

\item The element $x_i\sigma^k$ does not lie in $L_{i-1}\phi_{i-1}$
  for any $k \in \nset^0$, but $x_i\sigma^k = x_i\sigma^{k+m}$ for
  some $k \in \nset^0$ and $m \in \nset$. In this case, let
\[
L_i = L_{i-1} \cup \bigl\{p_{i,j} : j \in \{0,\ldots,k+m-1\}\bigr\};
\]
then $L_i$ is regular. Define $\phi_i$ by $\phi_i|_{L_{i-1}} =
\phi_{i-1}$ and $p_{i,j}\phi_i = x_i\sigma^j$ for $j \in \{0,\ldots,k+m-1\}$. Then
\begin{align*}
\Lambda(\sigma,\phi_i) ={}& \Lambda(L_{i-1},\phi_{i-1}) \\
&\cup \bigl\{(p_{i,j},p_{i,j+1}) : j \in \{0,\ldots,k+m-2\}\bigr\} \cup \{(p_{i,k+m-1},p_{i,k})\}
\end{align*}
is a union of regular relations and is thus regular.

\item The element $x_i\sigma^k$ does not lie in $L_{i-1}\phi_{i-1}$
  for any $k \in \nset^0$, and all $x_i\sigma^k$ are distinct. In this case, let
\[
L_i = L_{i-1} \cup ia^*,
\]
where $i$ is treated as a symbol $a$ is a new symbol; then $L_i$ is
regular. Define $\phi_i$ by $\phi_i|_{L_{i-1}} = \phi_{i-1}$ and
$ia^j\phi_i = x_i\sigma^j$ for $j \in \nset^0$. Then
\[
\Lambda(\sigma,\phi_i) = \Lambda(L_{i-1},\phi_{i-1}) \cup \{(ia^j,ia^{j+1}) : j \in \nset^0\}
\]
is a union of regular relations and is thus regular.

\end{enumerate}
In any case, $(L_{i+1},\phi_{i+1})$ is an FA-presentation for the
subalgebra $\mathcal{T}_i$. In particular, $\mathcal{T} =
\mathcal{T}_n$ is FA-presentable.
\end{proof}

Note, however, that the proof of
\fullref{Proposition}{prop:oneunaryop} does not yield an effective
algorithm for constructing the subalgebra. If such an algorithm
existed, reachability in the configuration graphs of deterministic
Turing machines would be soluble.

\begin{example}
\label{ex:nonfasubalgebra}
The example algebra $\mathcal{X}$ will consist of the disjoint union of a
semilattice and two copies of the configuration graph of a Turing
machine, augmented by extra unary operations.

\begin{figure}[tb]
\centerline{\includegraphics{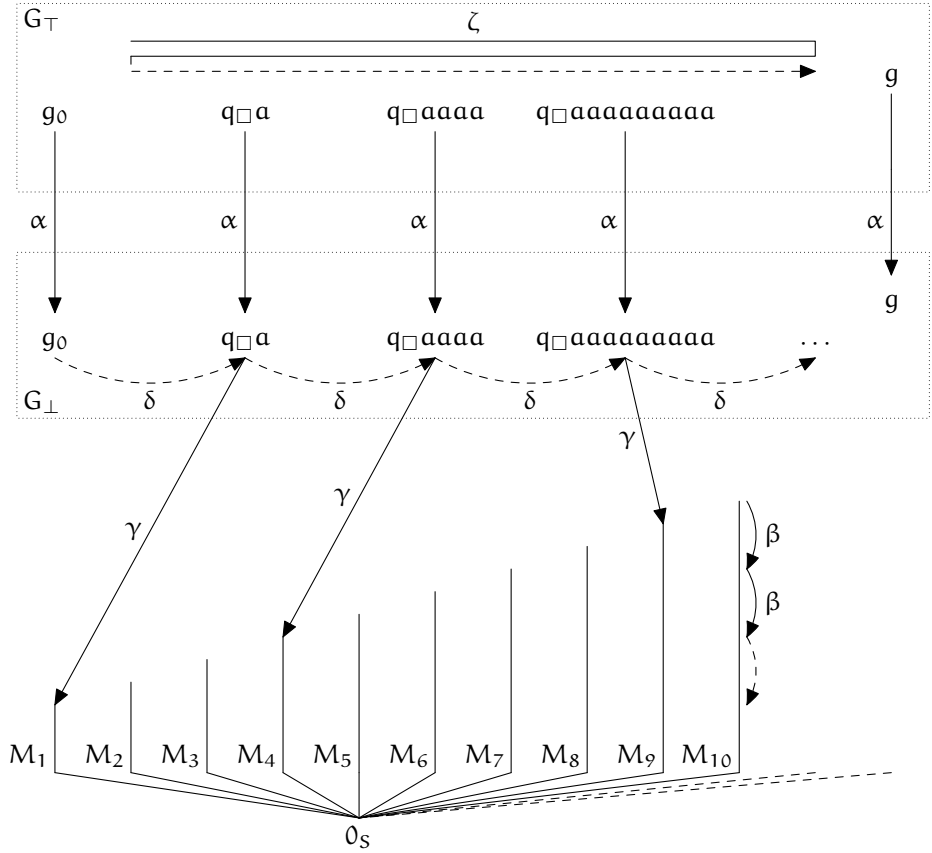}}
\caption{Schematic diagram of the algebra $\mathcal{X}$. The unary
  operation $\alpha$ maps each configuration of $G_\top$ to the
  corresponding configuration of $G_\bot$; the operation $\beta$ maps
  `down' within the semilattice; the operation $\gamma$ maps from
  `$q_\square$' configurations of $G_\bot$ to the maximum element of
  $M_i$, where $i$ is the number of symbols $a$ on the tape; the
  operation $\delta$ mimics the computation step of the Turing machine
  $\tm{T}$; and the operation $\zeta$ iterates through $G_\top$ in
  length-plus-lexicographic order.}
\label{fig:nonfasubalgebra1}
\end{figure}

For each $i \in \nset$, let $M_i$ be a chain of $2^i$ elements. Let
$S$ be the zero-union of all the $M_i$; the zero of $S$ is denoted
$0_S$. Notice that $S$ is a semilattice and can either be viewed as a
partially-ordered set or a semigroup where the multiplication is the
meet operation.

Let $\tm{T}$ be a deterministic Turing machine that generates
sequences of symbols $a^{j^2}$, where $j \in \nset$. More precisely,
$\tm{T}$ starts with an empty tape, performs some computation and
arrives in a distinguished state $q_\square$ with its tape contents
being $a^{1^2}$, then computes again and reaches state $q_\square$
with its contents being $a^{2^2}$. In general at various points during
its computation $\tm{T}$ has tape contents $a^{j^2}$ for every $j \in
\nset$, and $\tm{T}$ enters state $q_\square$ exactly when its tape
contents are $a^{j^2}$ for some $j \in \nset$. Notice that $\tm{T}$
runs forever without halting. Suppose $Q$ is the state set and $B$ the
tape alphabet of $\tm{T}$.

Recall that an \defterm{instantaneous description}, or
\defterm{configuration}, of $\tm{T}$ consists of its state, its tape
contents, and the position of its read/write head on its tape. The
\defterm{configuration} graph of $\tm{T}$ is an infinite graph whose
vertices are all conceivable configurations of $\tm{T}$, with a
directed edge from $g$ to $g'$ precisely if $\tm{T}$, when in
configuration $g$, can make a single computation step and reach
configuration $g'$. Note that in general not all configurations are
reachable from the initial configuration.

Let $G_\top$ and $G_\bot$ be two copies of the configuration graph of
$\tm{T}$. The carrier set for the algebra $\mathcal{X}$ will be $X = S
\cup G_\top \cup G_\bot$. The semilattice $S$ is already equipped with
a multiplication $\circ$; extend this multiplication to $X$ by
defining $g \circ g' = g$ and $g \circ s = s \circ g = g$ for $g,g'
\in G_\top \cup G_\bot$ and $s \in S$. The configuration graph
$G_\bot$ is equipped with a directed edge relation $\delta$. Since
$\tm{T}$ is deterministic, each vertex of the graph has outdegree $1$,
and so the relation $\delta$ can be viewed as a unary
operation. Extend $\delta$ to $X$ by $x\delta = x$ for all $x \in S
\cup G_\top$. We emphasize that $\delta$ acts like a computation
step by $\tm{T}$ in the configuration graph $G_\bot$, but acts like
the identity map on the configuration graph $G_\top$.

Now define three new unary operations. First, $\alpha$ sends each
configuration in $G_\top$ to the corresponding configuration in
$G_\bot$, and otherwise (for all elements of $S \cup G_\bot$) acts
like the identity map. Second, $\beta$ sends each element of a chain
$M_i$ to the element immediately below it in that same $M_i$, sends
the minimum element of each $M_i$ to $0_S$, and otherwise (for all
elements of $\{0_S\} \cup G_\top \cup G_\bot$) acts like the identity
map. Third, $\gamma$ maps configurations in $G_\bot$ with state
$q_\square$ and tape contents $a^k$ to the maximum element of the
chain $M_k$, and otherwise (for all other elements of $G_\bot$ and all
elements of $S \cup G_\top$) acts like the identity map. We will shortly
define yet another unary operation $\zeta$, but we must first set up
the FA-pre\-sent\-a\-tion. The algebra $\mathcal{X}$ will be
$(X,\circ,\alpha,\beta,\gamma,\delta,\zeta)$.

Let $L$ be the language $\{z\} \cup \{0,1\}^* \cup
\{\top,\bot\}B^*QB^*$, where $z$ is a new symbol not in $B$ or $Q$. Define $\phi : L \to X$ as follows:
\begin{itemize}
\item $z\phi = 0_S$.
\item If $u \in \{0,1\}^k$, then $u\phi$ is the $u$-th element
  (interpreting $u$ as a binary number) from the bottom in
  $M_{k}$. (Notice that since $M_{k}$ contains exactly $2^k$ elements,
  $\phi$ restricts to a bijection between $\{0,1\}^k$ and $M_k$.)
\item If $t \in \{\top,\bot\}$, $u,v \in B^*$ and $q \in Q$, then
  $(tuqv)\phi$ is the configuration in $G_t$ where the
  state is $q$, the tape contains $uv$, and the head points to the
  first symbol in $v$.
\end{itemize}
Let us first show that the definition of FA-pre\-sent\-a\-bility is
satisfied for the operations $\circ$, $\alpha$, $\beta$, $\gamma$, and
$\delta$.

To see that $\Lambda(\circ,\phi)$ is regular, it is simplest to notice
that the $\Lambda(\leq,\phi)$ is regular, where $\leq$ is the order on
the semilattice $S$, since an automaton recognizing
$\Lambda(\leq,\phi)$ must simply compare the lengths of two strings
over $\{0,1\}^*$ and compare them as binary numbers, and also always
accept if the left-hand input word is $z$ and the right lies in $\{z\}
\cup \{0,1\}^*$. Then, since $\circ$ is first-order definable in terms
of $\leq$, it follows that $\Lambda(\circ,\phi)$ is regular.

Next,
\begin{align*}
\Lambda(\alpha,\phi) ={}& \{(\top uqv,\bot uqv) : u,v \in B^*, q
\in Q\} \\
&\cup \{(w,w) : w \in \{z\} \cup \{0,1\}^* \cup \bot B^*QB^*\}
\end{align*}
is clearly regular.

An automaton recognizing $\Lambda(\beta,\phi)$ need only
decrement a binary number by $1$, recognize $(0^k,z)$, and
recognize the identity relation on $\{z\} \cup \{\top,\bot\}B^*QB^*$.

Now,
\[
\Lambda(\gamma,\phi) = \{(\bot a^kqa^l,1^{k+l}) : k,l \in \nset^0\} \cup \{(u,u) : u \in L - \bot a^*Qa^*\},
\]
which is easily seen to be regular.

The relation $\Lambda(\delta,\phi)$ is easily seen to be regular,
since each computation of a Turing machine makes only a small
localized change to the configuration as represented by words in
$B^*QB^*$; see \cite[p.~374]{khoussainov_autopres}.

We can now define our last operation $\zeta$. Let $\sqsubseteq$ be the
length-plus-lex\-i\-co\-graph\-ic ordering of words in $\top B^*QB^*$ induced
by some order on $\{\top\} \cup B \cup Q$. For any element $g \in
G_\top$, define $g\zeta$ as follows. Let $u$ be the unique word in
$\top B^*QB^*$ with $u\phi = g$. Let $u'$ be the word in $\top
B^*QB^*$ that succeeds $u$ in the $\sqsubset$ ordering. Then $g\zeta$ is
defined to be $u'\phi$. For all $x \in S \cup G_\bot$, define $x\zeta =
x$. Notice that
\begin{align*}
\Lambda(\zeta,\phi) ={}& \bigl\{(u,u') : u \in \top B^*QB^* \land (u \sqsubset u') \\
&\qquad\qquad \land (\forall v \in \top B^*QB^*)(u \sqsubset v \implies u' \sqsubseteq v)\bigr\}\\
&\cup \{(w,w) : w \in L - \top B^*QB^*\}
\end{align*}
is regular since an automaton can recognize the $\sqsubset$ relation.

Thus $(L,\phi)$ is an FA-presentation for the algebra $\mathcal{X} =
(X,\circ,\alpha,\beta,\gamma,\delta,\zeta)$.

By \fullref{Lemma}{lem:algebrafg} below, the algebra $\mathcal{X}$ is
finitely generated. Let $g_0$ be the initial configuration of
$\tm{T}$ in the configuration graph $G_\bot$. Let $\mathcal{Y}$ be the
subalgebra generated by $g_0$. Then $\mathcal{Y}$ is not
FA-pre\-sent\-a\-ble by \fullref{Lemma}{lem:subalgebranotfa} below.
\end{example}


\begin{lemma}
\label{lem:algebrafg}
The algebra $\mathcal{X}$ is finitely generated.
\end{lemma}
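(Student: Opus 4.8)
The plan is to exhibit a \emph{single} element that generates all of $\mathcal{X}$, which is certainly enough to establish finite generation. Let $u_0$ be the $\sqsubseteq$-least word of $\top B^*QB^*$ and set $g = u_0\phi \in G_\top$; I claim $\gen{g} = X$. The verification splits along the decomposition $X = G_\top \cup G_\bot \cup S$: first show that $\gen{g}$ contains all of $G_\top$ by iterating $\zeta$, then that applying $\alpha$ sweeps this up into all of $G_\bot$, and finally that $\gamma$ together with $\beta$ recovers every element of the semilattice $S$.

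For the first two stages, observe that the length-plus-lexicographic order $\sqsubseteq$ restricted to $\top B^*QB^*$ is a well-order of order type $\omega$: for each length there are only finitely many words, and there are words of every sufficiently large length, so a least element $u_0$ exists and the $\sqsubseteq$-successor function is a total map on $\top B^*QB^*$. Since this successor function is precisely the word-level description of $\zeta$ on $G_\top$, the orbit $\{g, g\zeta, g\zeta^2, \ldots\}$ enumerates the images under $\phi$ of every word of $\top B^*QB^*$, and hence (as $\phi$ restricts to a bijection between $\top B^*QB^*$ and $G_\top$) exhausts $G_\top$. Thus $G_\top \subseteq \gen{g}$. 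Because $G_\top$ and $G_\bot$ are copies of the same configuration graph, $\alpha$ restricts to a bijection of $G_\top$ onto $G_\bot$; applying $\alpha$ to each element of $G_\top$ therefore gives $G_\bot \subseteq \gen{g}$.

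For the last stage, the crucial point is that $G_\bot$ contains \emph{all} conceivable configurations of $\tm{T}$, not merely those reachable from the initial one. Consequently, for every $k \in \nset^0$ there is a configuration of $G_\bot$ with state $q_\square$ and tape contents $a^k$, for instance the one represented by $\bot a^k q_\square\emptyword$, and $\gamma$ sends it to the maximum element of the chain $M_k$. As all such configurations already lie in $\gen{g}$, so does the maximum element of each $M_k$. Finally $\beta$ steps downward through each chain and sends its minimum to $0_S$, so iterating $\beta$ from the maximum of $M_k$ produces every element of $M_k$ together with $0_S$; letting $k$ range over $\nset^0$ recovers the whole of $S$. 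Combining the three stages yields $X = \gen{g}$.

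The only delicate point is the well-ordering claim underpinning the first stage, together with the identification of $\zeta$ with the $\sqsubseteq$-successor map: one must confirm that iterating $\zeta$ from $u_0\phi$ really reaches every configuration of $G_\top$, which relies on the order type being $\omega$ rather than merely a well-order. Everything else amounts to a routine check that the chosen operations carry the generator into each required piece of $X$.
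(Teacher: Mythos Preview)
Your proof is correct and follows essentially the same approach as the paper's: start from the $\sqsubseteq$-minimal element of $G_\top$, iterate $\zeta$ to get all of $G_\top$, apply $\alpha$ to get $G_\bot$, then use $\gamma$ on the $q_\square$-configurations followed by iterated $\beta$ to recover all of $S$. You are in fact more careful than the paper in justifying why iterating $\zeta$ from the minimum reaches every element of $G_\top$ (the order-type-$\omega$ observation), and in stressing that $G_\bot$ contains \emph{all} configurations rather than only reachable ones; the paper leaves both points implicit.
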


\begin{proof}
Let $u$ be the $\sqsubseteq$-minimal word in $\top B^*QB^*$. Let $T$ be the
set of elements in the subalgebra generated by $u\phi \in G_\top$; the
aim is to show that $T = X$.

By repeated application of the operation $\zeta$ to $u\phi$, all
elements of $G_\top$ lie in $T$. By applying $\alpha$ to elements of
$G_\top$, all elements of $G_\bot$ lie in $T$. By applying $\gamma$ to
those configurations in $G_\bot$ where the state is $q_\square$ and
the tape contains $a^k$ for some $k \in \nset$, the maximum elements
of each chain $M_i$ lie in $T$. By repeatedly applying $\beta$ to
these maximum elements, all elements of the chains $M_i$ lie in $T$,
as does $0_S$. Hence all elements of $X$ lie in $T$ and so
$X = T$.
\end{proof}

\begin{lemma}
\label{lem:subalgebranotfa}
The subalgebra $\mathcal{Y}$ is not FA-pre\-sent\-a\-ble.
\end{lemma}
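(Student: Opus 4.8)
The plan is to assume for contradiction that $\mathcal{Y}$ is FA-pre\-sent\-a\-ble and to derive a clash between two counting estimates for a single regular language of representatives. Since every FA-pre\-sent\-a\-ble structure admits an injective binary FA-pre\-sent\-a\-tion (as noted after the definition, via \cite[Corollary~4.3]{khoussainov_autopres}), I may fix an injective FA-pre\-sent\-a\-tion $(K,\psi)$ for $\mathcal{Y}$ over a two-letter alphabet, and let $p$ bound the number of states of the (finitely many) automata recognizing the relations $\Lambda(\cdot,\psi)$ used below.

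First I would pin down $\mathcal{Y}$. Starting from $g_0 \in G_\bot$, the operations $\alpha$ and $\zeta$ act as the identity, $\delta$ traces out the non-repeating, never-halting computation path $R \subseteq G_\bot$, and $\gamma$ followed by iterated $\beta$ produces exactly the chains $M_{j^2}$ ($j \in \nset$) together with $0_S$; the operation $\circ$ introduces nothing new. Thus $\mathcal{Y} = R \cup \{0_S\} \cup \bigcup_j M_{j^2}$. Using \fullref{Proposition}{prop:firstorderreg} I would then exhibit the chain maxima as a first-order definable, hence regular, set: one has $S = \{x : x\delta = x\}$ (since $\tm{T}$ never halts, $g\delta \neq g$ on $R$), $\{0_S\} = \{x : x\delta = x \land x\beta = x\}$, the semilattice order is recovered by $y \le x \iff x \circ y = y$, and a chain maximum is an $x \in S \setminus \{0_S\}$ for which no $y \in S \setminus \{x\}$ satisfies $y\beta = x$. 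Writing $P = \psi^{-1}(\{\text{chain maxima}\}) \subseteq K$, I conclude that $P$ is an \emph{infinite} regular language containing exactly one word $w_j$, say of length $\ell_j$, for each $j \in \nset$.

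The technical heart is a pumping estimate on $\Lambda(\le,\psi)$. For fixed $j$, the section $\{u \in K : (u,w_j) \in \Lambda(\le,\psi)\}$ consists of the representatives of the $2^{j^2}+1$ elements lying below $m_j$, and so is finite. If some such $u$ had $|u| > |w_j| + p$, then in $\conv(u,w_j)$ the suffix following the exhaustion of $w_j$ would consist of more than $p$ symbols of the form $(u_i,\$)$, and pumping a loop there would yield infinitely many representatives below $m_j$, contradicting finiteness. Hence every element below $m_j$ has a representative of length at most $\ell_j + p$, so $2^{j^2} \le 2^{\ell_j + p + 1}$ and therefore $\ell_j \ge j^2 - (p+1)$.

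The contradiction then comes from two incompatible growth rates for $P$. On the one hand, $\ell_j \ge j^2 - (p+1)$ forces the number of words of $P$ of length at most $N$ to be at most $|\{j : \ell_j \le N\}| \le \floor{\sqrt{N+p+1}}$, which is sublinear in $N$. On the other hand, the set of word-lengths of the infinite regular language $P$ is ultimately periodic and infinite, hence of positive density, and $P$ contains at least one word of each such length, so $P$ has at least $\rho N$ words of length at most $N$ for some $\rho > 0$ and all large $N$. These estimates are irreconcilable, so no such $(K,\psi)$ exists. The main obstacle is the pumping estimate of the previous paragraph, for it is precisely there that the two design features of $\mathcal{X}$ are deployed: the \emph{exponential} chain sizes $2^{i}$ make $\ell_j$ grow at least quadratically, while the restriction to perfect-square tape-lengths enforced by $\tm{T}$ makes the resulting square-root count genuinely sublinear.
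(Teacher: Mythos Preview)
Your argument is essentially correct and is a genuinely different, more elementary route than the paper's.

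One small repair is needed. With the formula $y \le x \iff x \circ y = y$ taken over all of $Y$, every $y \in R \subseteq G_\bot$ satisfies $m_j \circ y = y$ (because the extended multiplication sets $s \circ g = g$ for $s \in S$, $g \in G_\bot$), so your ``section below $m_j$'' as written is infinite and the pumping step fails. This is harmless: you have already exhibited $S$ as $\{x : x\delta = x\}$, so simply take the section to be $\{u \in K : u\psi \in S \text{ and } (u,w_j) \in \Lambda(\le,\psi)\}$, and the count $2^{j^2}+1$ and the pumping bound go through unchanged.

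As for the comparison: the paper also isolates the chain maxima by a first-order formula, but then encodes the pairs (chain maximum, chain element) into a single regular relation, pads to make the first component always longer, and studies the length-$k$ word counts $z_k$ of the resulting regular language. These are either $0$ or $2^{j^2}$; pumping gives an arithmetic progression of $k$ with $z_k \neq 0$, and an auxiliary injection lemma forces $z_k$ to grow super-exponentially along that progression, contradicting the rationality of the generating function $\sum z_k x^k$. Your approach bypasses the generating-function machinery and the injection lemma entirely: from the same pumping-gap bound plus the binary alphabet you get $\ell_j \ge j^2 - O(1)$ directly, and then the contradiction is just that the infinite regular language $P$ would have $O(\sqrt{N})$ words of length at most $N$ while the ultimate periodicity of its length set forces $\Omega(N)$. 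Your version is shorter and makes explicit where the two design parameters of $\mathcal{X}$ (exponential chain sizes, square tape-lengths) are each used; the paper's version, on the other hand, isolates the obstruction as a single analytic fact about regular languages and does not need to invoke the length-set periodicity statement.
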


\begin{proof}
The first step is to show that the subalgebra $\mathcal{Y}$ contains
the chains $M_{j^2}$ and no other chains $M_i$.

Recall that $\mathcal{Y}$ is generated by $g_0$, the initial
configuration of $\tm{T}$ in $G_\bot$. The operation $\delta$ applied
repeatedly to $g_0$ yields every element reachable from $g_0$ in
the configuration graph $G_\bot$. Let $H$ be the set of these
reachable elements. By the definition of $\tm{T}$, the set $H$ includes
configurations with state $q_\square$ and tape contents $a^{j^2}$ for
all $j \in \nset$. Furthermore, the definition of $\tm{T}$ ensures
that $H$ includes no other configuration with state $q_\square$. The
operation $\gamma$ applied to $H$ yields the maximum element of
every $M_{j^2}$ (where $j \in \nset$). The operation $\beta$ now
yields all elements of each $M_{j^2}$ and also yields $0_S$. So
$\mathcal{Y}$ contains the set $Y = \{0_S\} \cup H \cup \bigcup_{j \in \nset}
M_{j^2}$. It is easy to see that $Y$ is closed under
every operation. So the domain of $\mathcal{Y}$ is $Y$. In particular,
$\mathcal{Y}$ contains the chains $M_{j^2}$ and no other chains $M_i$.

Now suppose, with the aim of obtaining a contradiction, that
$\mathcal{Y}$ admits an injective automatic presentation
$(L,\phi)$. Let
\begin{equation}
\label{eq:multuse1}
Y_1 = \{y \in Y : y \circ 0_S = 0_S\};
\end{equation}
notice that $Y_1 = \{0_S\} \cup \bigcup_{j\in\nset} M_{j^2}$. Notice further that $Y_1$ is defined by a
first order formula and so $L_1 = Y_1\phi^{-1}$ is regular. Observe that
the order relation $\leq$ on the subsemilattice $Y_1$ is first-order
definable in terms of $\circ$. Let
\begin{equation}
\label{eq:multuse2}
K_1 = \bigl\{u \in L_1 : (\forall v)\bigl((u\phi \leq v\phi) \implies (u\phi = v\phi)\bigr)\bigr\}.
\end{equation}
Then $K_1$ consists of representatives in $L$ of the maximum elements
in the various sub-chains $M_{j^2}$ of $Y_1$. Since it is defined by a
first-order formula, $K_1$ is regular. Let
\[
K_2 = \bigl\{u \in K_1 : (\forall v \in L_1)\bigl((v \in K_1 \wedge |u| = |v|) \implies (v \sqsubseteq u)\bigr)\bigr\};
\]
then $K_2$ consists of length-plus-lexicographically minimal words of
each length in $K_1$. The language $K_2$ is regular. The
relation
\begin{equation}
\label{eq:multuse3}
R_1 = \bigl\{(u,v) : (u \in K_2) \wedge (u\phi \geq v\phi) \wedge (v\phi \neq 0_S)\bigr\}
\end{equation}
is regular. Notice that $R_1$ relates a word $u \in K_2$, which
represents the maximum element of some chain $M_{j^2}$, to all the
words $v$ representing elements of that chain. Let $n$ be the number of
states in an automaton recognizing $\conv(R_1)$.

If $(u,v) \in R_1$, then $|v| \leq |u| + n$, for otherwise one could
pump the subword of $v$ that extends beyond $u$ to obtain infinitely
many words representing elements of a single $M_{j^2}$, which would entail
infinitely many distinct elements of $M_{j^2}$ (since $\phi$ is
injective), which is a contradiction.

Let
\[
R_2 = \{(u\#^n,v) : (u,v) \in R_1\},
\]
where $\#$ is a new symbol. By the observation in the last paragraph,
if $(u,v) \in R_2$, then $|u| \geq |v|$. Furthermore, if
$(u,v),(u',v') \in R_2$ and $|u| = |u'|$, then $u = u'$ by the
definition of $R_2$ and $K_2$. Moreover, no word in $\conv(R_2)$ contains a
letter whose left-hand component is $\$$. Therefore the number of
words of length $k$ in $\conv(R_2)$ is either $0$ or, if there is a
word $u\in K_2$ of length $k-n$, the number of possible words $v$ such
that $(u\#^n,v)$ lies in $R_2$, which is in turn the number of
elements of the chain $M_{j^2}$ in which $u\phi$ lies, which is
$2^{j^2}$.

Let $z_k$ be the number of words in $\conv(R_2)$ of length $k$. By the
observation in the last paragraph, whenever $z_k$ is non-zero, it is
the number of elements in some chain $M_{j^2}$. Since $\conv(R_2)$ is a
regular language, the generating function
\[
f(x) = \sum_{k=0}^\infty z_kx^k
\]
is a rational function with no singularity at $0$. Thus the radius of
convergence of its power series expansion must be strictly greater
than zero. The aim is to obtain a contradiction by showing that this
power series has radius of convergence zero.

By the pumping lemma for regular languages, there are constants $p,q$
such that $z_{p+kq}$ is non-zero for all $k \in \nset^0$. So for every
$k \in \nset^0$, there exists $k\vartheta \in \nset^0$ such that
$z_{p+kq} = 2^{(k\vartheta)^2}$. This defines an injection $\vartheta
: \nset^0 \to \nset^0$. By \fullref{Lemma}{lem:injectiongeq} below, $k
\leq k\vartheta$ for infinitely many values of $k \in \nset^0$. So by
choosing $k$ to be large enough and also satisfying $k \leq
k\vartheta$, the value
\[
|z_{p+kq}|^{1/(p+kq)} = \Bigl|2^{(k\vartheta)^2}\Bigr|^{1/(p+kq)} = \Bigl|2^{(k\vartheta)^2/(p+kq)}\Bigr| 
\]
can be made arbitrarily large. Therefore
\[
\limsup_{k\to\infty} |z_{p+kq}|^{1/(p+kq)} = \infty,
\]
and hence $\limsup_{k\to\infty} |z_k|^{1/k} = \infty$, from which it follows that
the radius of convergence of the power series $\sum_{k=0}^\infty
z_kx^k$ is zero.
\end{proof}

\begin{lemma}
\label{lem:injectiongeq}
Let $\vartheta : \nset^0 \to \nset^0$ be an injection. Then there are
infinitely many $i \in \nset$ such that $i \leq i\vartheta$.
\end{lemma}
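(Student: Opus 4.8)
The plan is to prove this by contradiction: suppose that $i\vartheta < i$ for all but finitely many $i$. First I would fix a threshold $N$ such that $i\vartheta < i$ holds for every $i \geq N$. The key observation is that for each such $i$, the image $i\vartheta$ lands in the finite set $\{0,1,\ldots,i-1\}$, so we should track how the images are forced to collide once the domain is large enough.

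Concretely, consider the values $\vartheta$ takes on the interval $\{N, N+1, \ldots, M\}$ for some large $M$. Each of these $M - N + 1$ inputs maps, under the assumption, into $\{0,1,\ldots,M-1\}$ (in fact into $\{0,\ldots,i-1\} \subseteq \{0,\ldots,M-1\}$). But $\vartheta$ is an injection, so the images of \emph{all} inputs in $\{0,1,\ldots,M\}$ are distinct, giving $M+1$ distinct values; meanwhile the total number of available target values is only bounded by a pigeonhole count. The cleanest way to force the contradiction is to count: if $i\vartheta < i$ for all $i \geq N$, then the $M - N + 1$ values $N\vartheta, (N+1)\vartheta, \ldots, M\vartheta$ are distinct elements of $\{0,1,\ldots,M-1\}$, a set of size $M$; combining these with the finitely many values taken on $\{0,\ldots,N-1\}$, injectivity eventually exhausts the finite codomain as $M \to \infty$. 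I would make this precise by noting that restricting $\vartheta$ to $\{0,1,\ldots,M\}$ yields $M+1$ distinct images, and under the standing assumption each of the last $M - N + 1$ of these images is at most $M - 1$, which caps the achievable spread and contradicts injectivity once $M$ is taken large relative to $N$.

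The main obstacle, such as it is, is bookkeeping the pigeonhole argument cleanly so that the finitely many ``exceptional'' inputs below $N$ do not interfere. I would handle this by arguing that the map $i \mapsto i\vartheta$ restricted to $\{i : i \geq N\}$ must be a self-map into strictly smaller values, and an injection $\{N, N+1, \ldots\} \to \nset^0$ whose every image is strictly below its input cannot exist because iterating $\vartheta$ would produce an infinite strictly decreasing sequence of natural numbers, which is impossible. This last phrasing is the slickest: start from any $i_0 \geq N$, and since $i_0 \vartheta < i_0$, repeatedly applying the inequality gives a strictly decreasing chain in $\nset^0$, contradicting well-ordering. Thus the assumption fails, so $i \leq i\vartheta$ for infinitely many $i$, as required.
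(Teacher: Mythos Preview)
Your pigeonhole approach is essentially the paper's argument, and it works once you pin down the choice of $M$: take $M$ larger than both $N$ and $\max\{0\vartheta,\ldots,(N-1)\vartheta\}$; then every element of $\{0,\ldots,M\}$ maps into $\{0,\ldots,M-1\}$, giving $M+1$ distinct images in a set of size $M$. The paper does exactly this, choosing $n = \max\{i\vartheta : i \leq i\vartheta\}$ as the threshold and showing $\{0,\ldots,n+1\}\vartheta \subseteq \{0,\ldots,n\}$.

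However, your ``slickest'' infinite-descent phrasing has a genuine gap. You assert that an injection $\{N,N+1,\ldots\} \to \nset^0$ with $i\vartheta < i$ for every $i \geq N$ cannot exist, but this is false: for $N = 1$, the map $i \mapsto i-1$ is exactly such an injection. The iteration argument breaks because once an iterate drops below $N$ (which it must, after finitely many steps), the hypothesis $i\vartheta < i$ no longer applies, so the chain need not continue to decrease. The contradiction genuinely requires using injectivity of $\vartheta$ on \emph{all} of $\nset^0$, including the inputs below $N$; this is precisely what the pigeonhole version captures and what the descent version throws away.
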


\begin{proof}
Suppose, with the aim of obtaining a contradiction, that there are
only finitely many $i \in \nset^0$ such that $i \leq i\vartheta$. Let
$I = \{i \in \nset : i \leq i\vartheta\}$; by supposition, $I$ is
finite. Let $m = \max(I)$ and $n = \max(I\vartheta)$. Then $m \leq
m\vartheta$ and $m\vartheta \leq n$, so $m \leq n$. Furthermore,
$i\vartheta < i$ for $i \notin I$, and $i\vartheta \leq n$ for $i \in
I$. Hence $i\vartheta \leq n$ for all $i \leq n$. Since $m = \max(I)$
and $m \leq n$, it follows that $n+1 \notin I$ and so $(n+1)\vartheta
< n+1$. Putting the last two sentences together shows that
$\{0,\ldots,n+1\}\vartheta \subseteq \{0,\ldots,n\}$, which
contradicts $\vartheta$ being an injection. Thus there are infinitely
many $i \in \nset^0$ such that $i\vartheta \geq i$.
\end{proof}

\begin{remark}
\label{rem:evenunary}
In \fullref{Example}{ex:nonfasubalgebra}, the algebra $\mathcal{X}$
has exactly one binary operation, namely the multiplication
$\circ$. However, this is not used for the finite generation of
$\mathcal{X}$ or $\mathcal{Y}$, and is used in only three places in
the proof of \fullref{Lemma}{lem:subalgebranotfa}, namely
\eqref{eq:multuse1}, \eqref{eq:multuse2}, and \eqref{eq:multuse3}. (In
the latter two, it is hidden within the first-order definition of the
order $\leq$ on $S$.)

However, we can modify $\mathcal{X}$ by removing $\circ$ and adding
two new unary operations $\lambda$ and $\mu$ to obtain a new algebra
$\mathcal{X}'$, where the subalgebra $\mathcal{Y}'$ generated by
$g_0$ has the same domain $Y$ as $\mathcal{Y}$ and where the
$\mathcal{Y}'$ can be proved to be non-FA-pre\-sent\-a\-ble in the same
way. Hence, even the class of FA-pre\-sent\-a\-ble algebras with only unary
operations is not closed under forming finitely generated subalgebras.

The first operation $\lambda$ sends every element of each chain $M_i$ to
the maximum element of that chain, and acts like the identity map
elsewhere (that is, on $\{0_S\} \cup G_\top \cup G_\bot$). The second
operation $\mu$ sends every element of $S$ to $0_S$ and acts like the
identity map elsewhere (on $G_\top \cup G_\bot$). Notice that
\begin{align*}
\Lambda(\lambda,\phi) &= \{(u,1^{|u|}) : u \in \{0,1\}^*\} \cup \{(u,u) : u \in \{z\} \cup \{\top,\bot\}B^*QB^*\} \\
\Lambda(\mu,\phi) &= \{(u,z) : u \in \{z\} \cup \{0,1\}^*\} \cup \{(u,u) : u \in \{\top,\bot\}B^*QB^*\};
\end{align*}
both relations are clearly regular. Thus $\mathcal{X}' =
(X,\alpha,\beta,\gamma,\delta,\zeta,\lambda,\mu)$ is FA-pre\-sent\-a\-ble and
has only unary operations.

To prove that the subalgebra $\mathcal{Y}'$ (generated by $g_0$) is
not FA-pre\-sent\-a\-ble, follow the proof of
\fullref{Lemma}{lem:subalgebranotfa}, with the following definitions
for $Y'$, $K_1$, and $R_1$ replacing \eqref{eq:multuse1},
\eqref{eq:multuse2}, and \eqref{eq:multuse3}:
\begin{align*}
Y_1 &= \{y \in Y : y\mu = 0_S\}, \\
K_1 &= \bigl\{u \in L_1 : ((u\phi)\lambda = u\phi) \land (u\phi \neq 0_S)\bigr\}, \\
R_1 &= \bigl\{(u,v) : (u \in K_2) \land (u\phi = (v\phi)\lambda)\bigr\}.
\end{align*}
Note that these are first order definitions in terms of the new
signature in which $\lambda$ and $\mu$ replace $\circ$.
\end{remark}

\section{Diagrams for unary FA-pre\-sent\-a\-tions}
\label{sec:pumping}

This section develops a diagrammatic representation for unary
FA-pre\-sent\-a\-tions. In the following section, we apply this
representation to prove results about subalgebras of unary
FA-pre\-sent\-a\-ble algebras.

Let $(a^*,\phi)$ be an injective unary FA-pre\-sent\-a\-tion for a
relational structure $\mathcal{S}$ with relations $R_1,\ldots,R_n$;
such an FA-pre\-sent\-a\-tion exists by \fullref{Theorem}{thm:allwords}. For
each $i \in \{1,\ldots,n\}$, let $\fsa{A}_i$ be a deterministic
$r_i$-tape automaton recognizing $\Lambda(R_i,\phi)$, where $r_i$ is
the arity of $R_i$. Let us examine the structure of the automata
$\fsa{A}_i$. For ease of explanation, view $\fsa{A}_i$ as a directed
graph with no failure states: $\fsa{A}_i$ fails if it is in a
state and reads a symbol that does not label any outgoing edge from
that state.

Define a partial order $\prec$ on elements of $\{a,\$\}^{r_i}$ by
$(x_1,\ldots,x_{r_1}) \preceq (x'_1,\ldots,x'_{r_1})$ if and only if
$x'_i = \$ \implies x_i = \$$ for all $i$. Since $\fsa{A}_i$
recognizes words in $\conv((a^*)^{r_i})$, it will only successfully
read words consisting of a $\preceq$-decreasing sequence of tuples in
$\{a,\$\}^{r_i}$. Thus an edge labelled by a tuple $b$ leads to a
state all of with outgoing edges are labelled by $\preceq$-preceding
tuples.

Since $\fsa{A}_i$ is deterministic, while it reads letters of a fixed
tuple $b \in \{a,\$\}^{r_i}$, it follows a fixed path which, if the
string of letters $b$ is long enough, will form a uniquely determined
loop. This loop, if it exists, is simple. From various points along
this loop and the path leading to it, paths labelled
by $\prec$-preceding letters of
$\{a,\$\}^{r_i}$ may `branch off'. \fullref{Figure}{fig:unaryautomaton} shows an
example where $r_i$ is $2$.

\begin{figure}[tb]
\centerline{\includegraphics{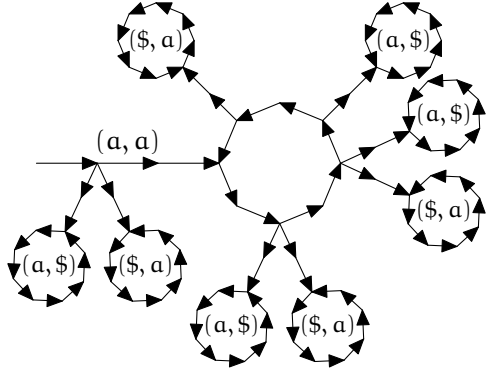}}
\caption{Example automaton recognizing $\Lambda(R_i,\phi)$ where $r_i$
  is $2$. Edges labelled $(a,a)$ form a path that leads into a
  uniquely determined loop. From this path and loop paths labelled by
  $(a,\$)$ or $(\$,a)$ branch off. (Notice that $(a,\$), (\$,a) \prec
  (a,a)$.)}
\label{fig:unaryautomaton}
\end{figure}

Let $D_i$ be a multiple of the lengths of the loops in $\fsa{A}_i$ (as
discussed above) that also exceeds the number of states in
$\fsa{A}_i$. Let $D$ be a multiple of the various $D_i$.

Fix some $i$. Let $\fsa{A}_i$ have initial state $q_0$ and transition
function $\delta$. Consider a word $uvw \in \conv(L(\fsa{A}_i))$, where
$v = b^\beta$ for some $b \in (A \cup \{\$\})^{r_i}$ and $\beta \geq
D$. Suppose that $(q_0,u)\delta = q$. When $\fsa{A}_i$ is in state $q$
and reads $v$, it completes a loop before finishing reading $v$. (By
the discussion above, the loop is simple and uniquely determined.) So
$v$ factorizes as $v'v''v'''$, with $|v''| > 0$, such that
$(q,v')\delta = (q,v'v'')\delta = q'$. Assume that $|v'|$ is minimal,
so that $q'$ is the first state on the loop that $\fsa{A}_i$
encounters while reading $v$. Assume further that $|v''|$ is minimal,
so that $\fsa{A}_i$ makes exactly one circuit around the loop while
reading $v''$. Now, by definition, $D$ is a multiple of $|v''|$. Let
$m = D/|v''|$. So $|v(v'')^{m+1}v'''| = |v| + D$. By the pumping
lemma, $uv'(v'')^{m+1}v'''w \in \conv(L(\fsa{A}_i))$.

Consider what this means in terms of the tuple $\vec{p} =
(a^{p_1},\ldots,a^{p_{r_i}})$ such that $\conv(\vec{p}) = uvw$. Since $v' \in
b^*$, it follows that
\[
uv'(v'')^{m+1}v'''w = \conv(a^{p_1+q_1},\ldots,a^{p_{r_i}+q_{r_i}}),
\]
where
\[
q_j = \begin{cases} 0 & \text{if $p_j \leq |u|$} \\
D & \text{if $p_j \geq |uv|$}.
\end{cases}
\]
(Note that either $p_j \leq |u|$ or $p_j \geq |uv|$ since $v \in b^*$
for a fixed $b \in (A \cup \{\$\})^{r_i}$.) Therefore we have the following:

\begin{pumprule}
\label{rule:up}
If the components of a tuple in $\Lambda(R_i,\phi)$ can be
partitioned into those that are of length at most $l \in
\nset$ and those that have length at least $l + D$, then [the word
encoding] this tuple can be pumped so as to increase by $D$ the
lengths of those components that are at least $l + D$ letters
long and yield another [word encoding a] tuple in $\Lambda(R_i,\phi)$.
\end{pumprule}

(Notice that this also applies when all components have length
at least $D$; in this case, set $l = 0$.) 

With the same setup as above, suppose $|v| \geq 2D$. Then $\fsa{A}_i$
must follow the loop labelled by $v''$ starting at $q'$ at least
$m = D/|v''|$ times. That is, $v$ factorizes as
$v'(v'')^m\tilde{v}'''$. By the pumping lemma, $uv'\tilde{v}''' \in
\conv(L(\fsa{A}_i))$ and $|v'\tilde{v}'''| = |v| - D$. Therefore, we also
have the following:

\begin{pumprule}
\label{rule:down}
If the components of a tuple in $\Lambda(R_i,\phi)$ can be divided
into those that are of length less than $l \in \nset$ and those
that have length at least $l + 2D$, then [the word encoding] this
tuple can be pumped so as to decrease by $D$ the length of those
components that are at least $l + 2D$ letters long and yield
another [word encoding a] tuple in $\Lambda(R_i,\phi)$.
\end{pumprule}

This ability to pump so as to increase or decrease lengths of
components by a constant $D$ lends itself to a very useful
diagrammatic representation of the unary FA-pre\-sent\-a\-tion
$(a^*,\phi)$. Consider a grid of $D$ rows and infinitely many
columns. The rows, from bottom to top, are $B[0],\ldots,B[D-1]$. The
columns, starting from the left, are $C[0],C[1],\ldots$. The point in
column $C[x]$ and row $B[y]$ corresponds to the word $a^{xD+y}$. For
example, in the following diagram, the distinguished point is in
column $C[3]$ and row $B[2]$ and so corresponds to $a^{3D+2}$:

\inpargraphic{\jobname-pumpingdiagram1.eps}

The power of such diagrams is due to a natural correspondence between
pumping as in \fullref{Pumping rules}{rule:up} and~\ref{rule:down} and
certain simple manipulations of tuples of points in the
diagram. Before describing this correspondence, we must set up some
notation. We will not distinguish between a point in the grid and the
word to which it corresponds. The columns are ordered in the obvious
way, with $C[x] < C[x']$ if and only if $x < x'$. Extend the notation
for intervals on $\nset$ to intervals of contiguous columns. For
example, for $x,x' \in \nset$ with $x \leq x'$, let $C[x,x')$ denotes
  the set of elements in columns $C[x],\ldots,C[x'-1]$, and
  $C(x,\infty)$ denotes the set of elements in columns
  $C[x+1],C[x+2],\ldots$. For any element $u \in a^*$, let $c(u)$ be
  the index of the column containing $u$.

Consider the components of an $r_i$-tuple $\vec{p}$ in
$\Lambda(R_i,\phi)$, viewed as an $r_i$-tuple of points in the
diagram. If there is a column $C[x]$ that contains none of the
components of $\vec{p}$, then all the components that lie in $C[0,x)$
  are at least $D$ shorter than those lying in $C(x,\infty)$. Hence
  the word encoding the tuple $\vec{p}$ can be pumped between these
  two sets of components in accordance with \fullref{Pumping
    rule}{rule:up}. This corresponds to shifting all those components
  that lie in $C(x,\infty)$ rightwards by one column. The tuple that
  results after this rightward shift of some components also lies in
  $\Lambda(R_i,\phi)$.  [Notice in particular that if column $C[0]$
    contains none of the components of $\vec{p}$, then every component
    can be shifted right by one column, giving a new tuple that also
    lies in $\Lambda(R_i,\phi)$.] This rightward shifting of
  components can be iterated arbitrarily many times to yield new
  tuples. Thus we have the following diagrammatic version of
  \fullref{Pumping rule}{rule:up}:

\begin{shiftrule}
\label{shiftrule:up}
Consider the components of an $r_i$-tuple $\vec{p}$ in
$\Lambda(R_i,\phi)$, viewed as an $r_i$-tuple of points in the
diagram. If there is a column $C[x]$ that contains none of the
components of $\vec{p}$, then for any $k \in \nset$, shifting the
components in $C(x,\infty)$ to the right by $k$ columns yields a
tuple that also lies in $\Lambda(R_i,\phi)$.
\end{shiftrule}

Similarly, if there are two adjacent columns $C[x]$ and $C[x+1]$ that
contain none of the components of $\vec{p}$, then every component in
$C[0,x)$ is at least $2D$ shorter than every component in
  $C(x+1,\infty)$. Therefore the word encoding this tuple can be
  pumped between these sets of components in accordance with
  \fullref{Pumping rule}{rule:down}. This corresponds to shifting all
  components in $C(x+1,\infty)$ leftwards by one column. The tuple
  that results after this leftward shift of some components also lies
  in $\Lambda(R_i,\phi)$. [Notice in particular that if columns $C[0]$
    and $C[1]$ contain none of the components of $\vec{p}$, then every
    component can be shifted left by one column, giving a new tuple
    that also lies in $\Lambda(R_i,\phi)$.] This leftward shifting of
  components can be iterated to yield new tuples for as long as the
  two columns $C[x]$ and $C[x+1]$ do not contain any elements of the
  latest tuple. Thus we have the following diagrammatic version of
  \fullref{Pumping rule}{rule:down}:

\begin{shiftrule}
\label{shiftrule:down}
Consider the components of an $r_i$-tuple $\vec{p}$ in
$\Lambda(R_i,\phi)$, viewed as an $r_i$-tuple of points in the
diagram. If the columns in $C[x,x+h]$ contain none of the
components of $\vec{p}$, then for any $k$ with $0 < k \leq h$,
shifting the components in $C(x+h,\infty)$ to the left by $k$
columns yields a tuple that also lies in $\Lambda(R_i,\phi)$.
\end{shiftrule}

For convenience, define for every $n \in \zset$ a partial map $\tau_n
: a^* \to a^*$, where $a^k\tau_n$ is defined to be $a^{k+nD}$ if $k+nD
\geq 0$ and is otherwise undefined. Notice that if $n \geq 0$, the map
$\tau_n$ is defined everywhere. In terms of the diagram, $a^k\tau_n$
is the element obtained by shifting $a^k$ to the right by $n$ columns
if $n \geq 0$ and to the left by $-n$ columns if $n < 0$. The values
of $k$ and $n < 0$ for which $a^k\tau_n$ are undefined are precisely
those where shifting $a^k$ to the left by $-n$ columns would carry it
beyond the left-hand edge of the diagram.

\begin{example}
In order to illustrate \fullref{Shift rules}{shiftrule:up} and
\ref{shiftrule:down}, consider a $4$-tuple $\vec{p} =
(a^{2D+1},a^{D+2},a^{5D+3},a^{7D+2})$. This corresponds to the
following four points in the diagram.

\inpargraphic{\jobname-pumpingdiagram-example1.eps}

\fullref{Shift rule}{shiftrule:up} (or the corresponding
\fullref{Pumping rule}{rule:up}) can be applied in exactly three
ways here:
\begin{enumerate}

\item The column $C[0]$ contains no components of $\vec{p}$, so, by
  \fullref{Shift rule}{shiftrule:up}, for any $k \in \nset$, all
  components can be shifted rightward by $k$ columns,
  yielding the tuple $(a^{(2+k)D+1},a^{(1+k)D+2},a^{(5+k)D+3},a^{(7+k)D+2})$.

\item The columns $C[3]$ and $C[4]$ contain no components of
  $\vec{p}$, so, by \fullref{Shift rule}{shiftrule:up}, for any $k \in
  \nset$, the third and fourth components can be shifted rightwards by
  $k$ columns, yielding the tuple
  $(a^{2D+1},a^{D+2},a^{(5+k)D+3},a^{(7+k)D+2})$.

\item The column $C[6]$ contains no components of
  $\vec{p}$, so, by \fullref{Shift rule}{shiftrule:up}, for any $k \in
  \nset$, the fourth component of $\vec{p}$ can be shifted rightwards by
  $k$ columns, yielding the tuple
  $(a^{2D+1},a^{D+2},a^{5D+3},a^{(7+k)D+2})$.

\end{enumerate}

\fullref{Shift rule}{shiftrule:down} (or the corresponding
\fullref{Pumping rule}{rule:down}) can be applied in only one way here:
columns $C[3]$ and $C[4]$ contain no component of $\vec{p}$, so
the third and fourth components can be shifted leftwards by one column,
yielding the tuple $(a^{2D+1},a^{D+2},a^{4D+3},a^{6D+2})$.
\end{example}

\section{Unary FA-pre\-sent\-a\-ble algebras}

This section studies finitely generated subalgebras of unary
FA-pre\-sent\-a\-ble algebras. The key result,
\fullref{Theorem}{thm:subalgebralang}, shows that the language
representing elements of such a subalgebra is regular and that there
is an algorithm that effectively constructs this language. From this
it follows that the class of unary FA-pre\-sent\-a\-ble algebras is closed
under taking finitely generated subalgebras
(\fullref{Theorem}{thm:subalgebras}) and that the membership problem
for finitely generated subalgebras is decidable
(\fullref{Theorem}{thm:membership}).

\begin{theorem}
\label{thm:subalgebralang}
Let $\mathcal{S}$ be an algebra that admits an injective unary
FA-pre\-sent\-a\-tion $(a^*,\phi)$ and let $\mathcal{T}$ be a finitely
generated subalgebra of $\mathcal{S}$. Let $L$ be the sublanguage of
$a^*$ consisting of representatives of elements of $\mathcal{T}$. Then
$L$ is regular, and an automaton recognizing $L$ can be constructed
effectively from a finite set of words representing a generating set
for $\mathcal{T}$.
\end{theorem}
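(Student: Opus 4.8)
The plan is to analyse how a finitely generated subalgebra is built from its generators, and to show that the set of representatives of such a subalgebra is closed (up to finitely many exceptions) under the shift operations $\tau_n$ developed in \fullref{\S}{sec:pumping}, so that regularity follows from an ultimate-periodicity argument.

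First I would fix an injective unary FA-pre\-sent\-a\-tion $(a^*,\phi)$ for $\mathcal{S}$, which exists by \fullref{Theorem}{thm:allwords} whenever $\mathcal{S}$ is infinite (the finite case being trivial, as every finite language is regular). Let $\sigma_1,\ldots,\sigma_m$ be the operations of the algebra, with $r_j$ the arity of $\sigma_j$, and let $G = \{g_1,\ldots,g_t\}$ be the given finite generating set for $\mathcal{T}$, with each $g_s$ a word $a^{k_s} \in a^*$. The subalgebra $\mathcal{T}$ is the smallest subset of $S$ containing $G$ and closed under each operation $\sigma_j$; its language $L$ is the $\phi$-preimage of $\mathcal{T}$. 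For each operation $\sigma_j$ the relation $\Lambda(\sigma_j,\phi) \subseteq (a^*)^{r_j+1}$ (the graph of $\sigma_j$) is regular, and so the \fullref{Pumping rules}{rule:up} and~\ref{rule:down}, and equivalently \fullref{Shift rules}{shiftrule:up} and~\ref{shiftrule:down}, apply to it with a common constant $D$.

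The key step is to establish a \emph{periodicity} phenomenon for $L$. The intuition is this: a representative $a^k$ with $k$ large lies in $\mathcal{T}$ precisely because it is obtained from the generators by some sequence of applications of operations; each such application is an edge in a graph $\Lambda(\sigma_j,\phi)$, and the shift rules let us slide a tuple in such a graph rightwards or leftwards by multiples of $D$ whenever there is an empty column separating the components. The plan is to show, by induction on the length of a derivation, that if $a^k \in L$ with $k$ sufficiently large, then $a^{k+D} \in L$; roughly, one takes a derivation of $a^k$ from the generators and uniformly shifts every element appearing in it one column to the right via \fullref{Shift rule}{shiftrule:up}, obtaining a valid derivation of $a^{k+D}$. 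Making this precise requires care because an individual operation $\sigma_j$ relates an input tuple to a single output, and shifting all the inputs by $D$ need not shift the output by exactly $D$ unless there is a clear separating column; the main obstacle is therefore to show that, beyond some bound, one may always arrange derivations so that the relevant tuples have components either all short or all long (so that a separating empty column exists) and hence shift coherently. Symmetrically, \fullref{Shift rule}{shiftrule:down} should give that $a^k \in L$ for large $k$ implies $a^{k-D} \in L$, yielding two-sided periodicity. Combining these, $L$ agrees beyond some threshold $N$ with a union of arithmetic progressions modulo $D$, so $L$ is regular.

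Finally I would address effectiveness. Since all the relations $\Lambda(\sigma_j,\phi)$ and the constant $D$ are computable from the presentation, and the generators are given explicitly, one can enumerate elements of $\mathcal{T}$ by closing $G$ under the operations; the periodicity bound $N$ and the residues modulo $D$ that ultimately occur can be determined by computing $\mathcal{T} \cap \{a^0,\ldots,a^{N+D}\}$ for a suitable effectively-bounded $N$, and then the eventual behaviour of $L$ is forced by the periodicity established above. The hard part will be pinning down the explicit bound $N$ and proving the two-sided shift-closure rigorously, since the shift rules are conditional on the existence of empty columns and a single operation application can involve components of widely differing lengths; controlling this will likely require a pigeonhole argument on column occupancy together with an induction on derivation length.
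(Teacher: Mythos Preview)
Your overall strategy---derive ultimate periodicity of $L$ from the shift rules---is the paper's, but your specific plan has an unclosed circularity. You want $a^k\in L\Rightarrow a^{k+D}\in L$ for large $k$, by shifting a derivation one column rightward while keeping the generators fixed. The trouble is that in a single application $p=f(x_1,\ldots,x_k)$, \fullref{Shift rule}{shiftrule:up} only lets you shift those components lying to the right of some empty column, and which column is empty varies from tuple to tuple; an intermediate element may need to be left unshifted in one step and shifted in another, so no fixed short/long threshold supports an induction on derivation length. Worse, to use a shifted $x_l\tau_1$ as an input you need $x_l\tau_1\in L$ already, which is exactly the statement under proof. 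Your proposal acknowledges a pigeonhole will be needed ``on column occupancy'', but that pigeonhole (at most $r$ arguments among $r+1$ columns) only produces the empty column; it does not tell you that the shifted inputs lie in $L$.

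The paper breaks the circle with a \emph{different} pigeonhole---not on occupancy within a single tuple, but on the patterns $L\cap C[h,h+r]$ as $h$ varies. Each such pattern is a subset of a fixed $(r{+}1)D$-element set (where $r$ is the maximum arity), so there exist $h'>h$ beyond the generators with $L[h,h+r]\tau_{h'-h}\subseteq L[h',h'+r]$. With this containment taken as a \emph{hypothesis}, the induction goes through: in any tuple $(x_1,\ldots,x_k,p_i)$ the window $C[h],\ldots,C[h+r]$ has $r+1$ columns but at most $r$ of the $x_l$, so one column is empty; shifting the components to its right by $h'-h$ columns is licensed by \fullref{Shift rule}{shiftrule:up}, and the shifted intermediate inputs land in $L$ by the pigeonhole hypothesis rather than by the inductive one. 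The period obtained is $D(h'-h)$, not $D$. The reverse inclusion comes not from \fullref{Shift rule}{shiftrule:down} but from the observation that $|L[i]|\leq D$, so the increasing chain of containments $L[i]\tau_{h'-h}\subseteq L[i+h'-h]$ must stabilise to equality. Effectiveness is then handled by an iterative construction $L_0\subseteq L_1\subseteq\cdots\subseteq L$ (closing under one round of operations and under the periodicity detected so far) that halts when $L_{i+1}=L_i$, rather than by bounding your $N$ in advance.
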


\begin{proof}
We will first of all show $L$ is regular and then show how it can be
constructed effectively.

\medskip
\noindent\textit{Regularity.} Let $r$ be the maximum arity of any of
the operations in the signature of $\mathcal{S}$. (That is, $r$ is the
maximum of their arities \textit{qua} operations, not \textit{qua}
relations.)

For $x,y \in \nset^0$ with $x \leq y$, define new notation $L[x] = C[x]
\cap L$ and $L[x,y] = C[x,y] \cap L$.  Since the subalgebra
$\mathcal{T}$ is finitely generated, it is generated by the elements in
$L[0,m]\phi$ for some $m \in \nset^0$.

Suppose that $h,h' \in \nset \cup \{0\}$, where
$h' > h > m$, are such that
\begin{equation}
\label{eq:subalgebra1}
L[h,h+r]\tau_{h'-h} \subseteq L[h',h'+r].
\end{equation}
The immediate aim is to prove that
\begin{equation}
\label{eq:subalgebra2}
L[h+r+1]\tau_{h'-h} \subseteq L[h'+r+1].
\end{equation}

Because $h+r > m$, the elements of $L[0,h+r]\phi$ generate the
elements of $L[h+r+1]\phi$. That is, by applying the operations of
$\mathcal{S}$ to elements of $L[0,h+r]\phi$, one can obtain a finite
sequence of points $p_1,\ldots,p_n \in L[h+r+1,\infty)$ such that each
  $p_i\phi$ is obtained by a single application of some operation to
  elements from $(L[0,h+r] \cup \{p_1,\ldots,p_{i-1}\})\phi$, and such
  that $L[h+r+1]\phi \subseteq \{p_1,\ldots,p_n\}\phi$. [It may be necessary
    for some $p_i$ to lie in columns to the right of $C[h+r+1]$, in
    order to later generate the elements of $L[h+r+1]\phi$.]

\begin{figure}[tb]
\centerline{\includegraphics{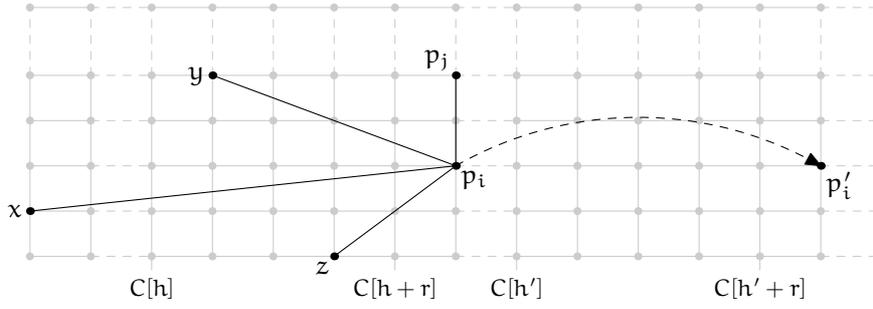}}
\caption{The indices $h$ and $h'$ are such that an elements of
  $L[h,h+r]\tau_{h'-h} \subseteq L[h',h'+r]$. The point
  $p'_i$ is defined to be $p_i\tau_{h'-h}$. The solid lines indicate
  how $p_i\phi$ is obtained by an application of some operation to the
  elements $x\phi$, $y\phi$, $z\phi$, and $p_j\phi$ (where $j < i$).}
\label{fig:subalgebra1}
\end{figure}

For each $i \in \{1,\ldots,n\}$, let $p'_i = p_i\tau_{h'-h}$. The
aim is to prove by induction on $i$ that $p'_i \in L$. We
will show that, just as $p_i\phi$ is obtained by an application of some
operation to elements from $(L[0,h+r] \cup
\{p_1,\ldots,p_{i-1}\})\phi$, so $p'_i\phi$ can be obtained by an application
of the same operation to elements of
\[
\bigl(L[0,h+r] \cup L[h',h'+r] \cup \{p'_1,\ldots,p'_{i-1}\}\tau_{h'-h}\bigr)\phi.
\]

So suppose that $p'_1,\ldots,p'_{i-1} \in L$. Suppose $p_i\phi =
(x_1\phi,\ldots,x_k\phi)f$, where $f$ is an operation of arity $k \leq
r$ and $x_1,\ldots,x_k \in L[0,h+r] \cup \{p_1,\ldots,p_{i-1}\}$. (See
\fullref{Figure}{fig:subalgebra1}.)  Without loss of generality,
assume that $c(x_j) \leq c(x_{j+1})$ for all $j \in
\{1,\ldots,k-1\}$. Since $k \leq r$, there is at least one column $C'$
in $C[h],\ldots,C[h+r]$ that does not contain any point
$x_1,\ldots,x_k$. Let $x_1,\ldots,x_j$ be the points lying to the left
of this column, and $x_{j+1},\ldots,x_k$ be those lying to the right.

\begin{figure}[tb]
\centerline{\includegraphics{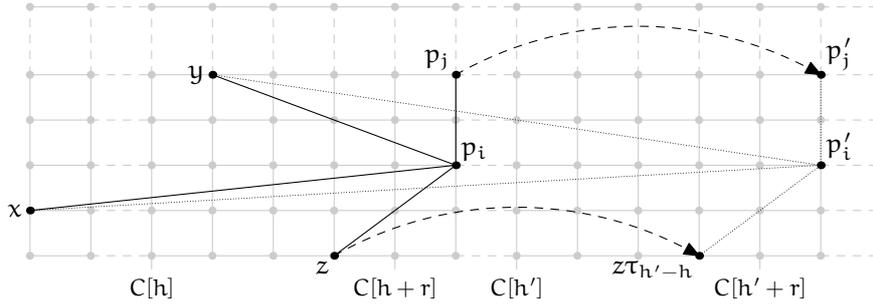}}
\caption{The dotted lines indicate how $p'_i$ is obtained by an
  application of the same operation to the elements $x$, $y$,
  $z\tau_{h'-h}$, and $p'_j = p_j\tau_{h'-h}$.}
\label{fig:subalgebra2}
\end{figure}

For $l \in \{j+1,\ldots,k\}$, let $x'_l = x_l\tau_{h'-h}$. Recall that
$p'_i = p_i\tau_{h'-h}$. Now, since $x_l \in L[h,h+r]$, it
follows that $x'_l \in L[h',h'+r]$ by \eqref{eq:subalgebra1}. On
the other hand, if $x'_l$ is one of the points $p'_1,\ldots,p'_{i-1}$,
then it lies in $L$ by the induction hypothesis. The application of
the operation $f$ to the elements $x_1\phi,\ldots,x_k\phi$ gives $p_i\phi$. Let
$\vec{p} = (x_1,\ldots,x_k,p_i) \in \Lambda(f,\phi)$. Then by \fullref{Shift
  rule}{shiftrule:up}, the tuple
\[
\vec{q} = (x_1,\ldots,x_j,x'_{j+1},\ldots,x'_k,p'_i),
\]
obtained by shifting rightwards the components $x_{j+1},\ldots,x_k,p_i$,
also lies in $\Lambda(f,\phi)$. Since all of $x_1\phi,\ldots,x_j\phi$ and
$x'_{j+1}\phi,\ldots,x'_k\phi$ lie in the subalgebra $\mathcal{T}$, so does $p'_i\phi$. (See
\fullref{Figure}{fig:subalgebra2}.) Hence $p'_i \in L$.

Therefore, by induction, all the points $p'_i$ lie in $L$, and hence
condition~\eqref{eq:subalgebra2} holds. Thus
condition~\eqref{eq:subalgebra1} entails
condition~\eqref{eq:subalgebra2}.

Since each of the sets $L[h,h+r]$ contains at most $(r+1)D$ elements,
there must exist $h,h' \in \nset \cup \{0\}$ with $h'>h$ such that
\eqref{eq:subalgebra1} holds. Fix two such values $h$ and $h'$. Then
it follows by induction on $i$ that $L[i]\tau_{h'-h} \subseteq
L[i+h'-h]$ for all $i \geq h$. Since the size of the sets $L[i]$ is
bounded above by $D$, there exists $g \in \nset$ such that
$L[i]\tau_{h'-h} = L[i+h'-h]$ for all $i \geq g$. Thus
\[
L = L[0,g-1] \cup (a^{D(h'-h)})^*L[g,g+h'-h-1]
\]
and so is regular.

\medskip
\noindent\textit{Effective construction.} Let $L_0$ be a finite set of
words representing a generating set for the subalgebra
$\mathcal{T}$. We will inductively construct a sequence of regular
sublanguages $L_i$ of $L$ for $i \in \nset^0$. From some point
onwards, every language in this sequence will be $L$ itself. We will
be able to detect when $L_i = L$, but we cannot bound in advance the
number of terms we must compute before obtaining $L$. For all $i,x,y
\in \nset^0$ with $x \leq y$, let $L_i[x] = C[x] \cap L_i$ and $L_i[x,y] = C[x,y] \cap
L_i$.

Inductively define the language $L_{i+1}$ as follows: find the
minimal $h$ such that there exists $h'$ such that
\[
L_i[h,h+r]\tau_{h'-h} \subseteq L_i[h',h'+r].
\]
(Notice that this is \eqref{eq:subalgebra1} restated with $L_i$ in
place of $L$.) Let $h_i$ be $h$ and let $h'_i$ be minimal among
corresponding such $h'$, and let
\[
L_{i+1} = L_{i} \cup \bigl((a^{D(h'_i-h_i)})^*L_i[h_i,h'_i-1]\bigr) \cup K_{i+1}
\]
where
\[
K_{i+1} = \bigl\{(s_1,\ldots,s_{r_j},x)f_j : j \in \{1,\ldots,k\}, s_1,\ldots,s_{r_j} \in L_{i}\phi, 
\text{$f_j$ has arity $r_j$}\bigr\}\phi^{-1}.
\]

[Notice that $h_i$ and $h'_i$ always exist since because the sets
  $L_i[h_i,h'_i+r]$ are all of bounded size. Notice further that when
  $L_i$ is finite, $L_i[h_i,h'_i+r]$ may be empty. Observe that $h_i$
  and $h'_i$ can be found simply by enumerating sets $L_i[h,h+r]$.]

Let us prove by induction that $L_i \subseteq L$. Clearly $L_0
\subseteq L$.  Suppose that $L_i \subseteq L$. By the reasoning in
used in the proof of regularity above, each element of
$(a^{D(h'_i-h_i)})^*L_i[h_i,h'_i-1]$ lies in $L$. The language
$K_{i+1}$ consists of representatives of elements obtained by applying
the operations of $\mathcal{S}$ to elements of $L_i\phi$. Since
$\mathcal{T}$ is a subalgebra, every element of the language $K_{i+1}$
thus lies in $L$. Hence $L_{i+1} \subseteq L$.

Furthermore, the language $K_{i+1}$ consists of representatives of
elements satisfying a first-order formula. Hence, if $L_i$ is regular
and given by a finite automaton, a finite automaton recognizing
$L_{i+1}$ can be effectively constructed. Since $L_0$ is finite, it
follows by induction that every $L_i$ is regular, and that for any $i
\in \nset^0$ an automaton recognizing $L_i$ can be effectively
constructed.

Notice further that $L_i \subseteq L_{i+1}$ and that for any $u \in
L$, there exists some $L_i$ such that $u \in L_i$.

By the reasoning in the proof of regularity above, there exist $g,h,h'
\in \nset$ be such that $L[i]\tau_{h'-h} = L[i+h'-h]$ for all $i \geq
g$. Note that $L = L[0,g-1] \cup (a^{D(h'-h)})^*L[g,g+h'-h]$. Let $n$ be
such that $L[0,g+h'-h] \subseteq L_n$. Then, by definition, $L_{n+1}$
contains $L_n$ and $(a^{D(h'-h)})^*L[g,g+h'-h]$. Hence $L \subseteq
L_{n+1}$.

Therefore, the algorithm constructing the various $L_i$ will at some
point construct $L_{n+1} = L$. Furthermore, the algorithm can check
whether $L_i$ is $L$ simply by checking whether $L_i = L_{i+1}$, for
if this holds, then $L_i\phi$ is closed under all the operations of
$\mathcal{S}$ and hence must be the domain of the subalgebra
$\mathcal{T}$. Thus there is an effective procedure that constructs
$L$.
\end{proof}

\begin{theorem}
\label{thm:subalgebras}
The class of unary FA-pre\-sent\-a\-ble algebras is closed under taking
finitely presented subalgebras.
\end{theorem}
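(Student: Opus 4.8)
The plan is to derive the theorem from \fullref{Theorem}{thm:subalgebralang}, which already supplies the regular language of representatives, together with the elementary fact that regularity of relations is preserved under intersection with a regular sublanguage. Let $\mathcal{S}$ be a unary FA-pre\-sent\-a\-ble algebra and let $\mathcal{T}$ be a finitely generated subalgebra with carrier set $T$.

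First I would dispose of the degenerate case. If $T$ is finite, then $\mathcal{T}$ is a finite structure and so trivially admits a unary FA-pre\-sent\-a\-tion, representing its finitely many elements by finitely many words over a one-letter alphabet. So assume $T$ is infinite; then $\mathcal{S}$ is infinite as well, and by \fullref{Theorem}{thm:allwords} it admits an injective unary FA-pre\-sent\-a\-tion $(a^*,\phi)$.

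Next, apply \fullref{Theorem}{thm:subalgebralang} to obtain the regular sublanguage $L \subseteq a^*$ consisting exactly of the representatives of elements of $\mathcal{T}$, and set $\psi = \phi|_L : L \to T$. Because $\phi$ is injective and $L$ is precisely the set of representatives of elements of $T$, the map $\psi$ is a bijection onto $T$; in particular, it is surjective. It then remains to check that $(L,\psi)$ satisfies the definition of a unary FA-pre\-sent\-a\-tion, that is, that the equality relation and the graph of each operation are regular over the one-letter alphabet $\{a\}$.

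The key observation for this final verification is that, for an operation $f$ of arity $k$ viewed as its $(k+1)$-ary graph relation, one has $\Lambda(f,\psi) = \Lambda(f,\phi) \cap L^{k+1}$. Here $\Lambda(f,\phi)$ is regular because $(a^*,\phi)$ is an FA-pre\-sent\-a\-tion for $\mathcal{S}$, while $L^{k+1}$ has regular convolution because $L$ is regular; an intersection of regular relations is regular, so $\Lambda(f,\psi)$ is regular. Similarly, the equality relation restricted to $L$ is merely the diagonal of the regular language $L$ and hence is regular. There is no genuine obstacle at this stage—the substantive work all sits inside \fullref{Theorem}{thm:subalgebralang}—and the only point needing care is that, since $\mathcal{T}$ is a subalgebra, the ambient relation $\Lambda(f,\phi)$ already records every value taken by $f$ on elements of $T$, so intersecting with $L^{k+1}$ loses no information. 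Hence $(L,\psi)$ is a unary FA-pre\-sent\-a\-tion for $\mathcal{T}$, as required.
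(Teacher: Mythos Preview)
Your proof is correct and follows essentially the same route as the paper: invoke \fullref{Theorem}{thm:subalgebralang} to obtain the regular sublanguage $L$, then observe that $\Lambda(f,\phi|_L) = \Lambda(f,\phi) \cap L^{k+1}$ is an intersection of regular relations. Your version is slightly more careful in two respects---you explicitly dispose of the finite case before invoking \fullref{Theorem}{thm:allwords}, and you verify the equality relation separately---but these are minor elaborations on the same argument.
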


\begin{proof}
Let $\mathcal{S}$ be an algebra that admits an injective unary
FA-pre\-sent\-a\-tion $(a^*,\phi)$ and let $\mathcal{T}$ be a finitely
generated subalgebra of $\mathcal{S}$. Let $L$ be the sublanguage of
$a^*$ consisting of representatives of elements of $\mathcal{T}$. By
\fullref{Theorem}{thm:subalgebralang}, $L$ is regular, whence
\[
\Lambda(R,\phi|_L) = \Lambda(R,\phi) \cap \bigl(\underbrace{L \times L \times \ldots \times L}_{\text{$k$ times}}\bigr)
\]
for any $k$-ary relation (or operation) $R$ of $\mathcal{S}$, which
shows that $(L,\phi|_L)$ is a unary FA-pre\-sent\-a\-tion for $\mathcal{T}$.
\end{proof}

Note that \fullref{Theorem}{thm:subalgebras} does not hold without
the hypothesis of finite generation: an arbitrary subsemigroup of a
unary FA-pre\-sent\-a\-ble semigroup may not even be FA-pre\-sent\-a\-ble
\cite[Example~9.4]{crt_unaryfa}.

The following theorem deals with the membership problem for finitely
generated subalgebras of unary FA-presentable algebras. This problem
is not decidable for general FA-presentable algebras, because
reachability in the configuration graph of a Turing machine is
undecidable.

\begin{theorem}
\label{thm:membership}
There is an algorithm that takes a unary FA-pre\-sent\-a\-tion $(a^*,\phi)$
for an algebra $\mathcal{S}$, a finite set $X$ of words in $a^*$, and a
word $w \in a^*$, and decides whether $w\phi$ lies in the subalgebra
generated by $X\phi$.
\end{theorem}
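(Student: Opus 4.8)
The plan is to reduce the membership question to the constructibility result already established in \fullref{Theorem}{thm:subalgebralang}. The key observation is that \fullref{Theorem}{thm:membership} is essentially a direct corollary: the statement asks us to decide, given a word $w \in a^*$, whether $w\phi$ lies in the subalgebra $\mathcal{T}$ generated by $X\phi$, and this is precisely the question of whether $w$ lies in the sublanguage $L$ of representatives of elements of $\mathcal{T}$.

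First I would deal with the case where $\mathcal{S}$ is infinite so that \fullref{Theorem}{thm:allwords} applies. By that theorem, $\mathcal{S}$ has an injective unary FA-pre\-sent\-a\-tion $(a^*,\psi)$, and the passage from the given presentation $(a^*,\phi)$ to this injective one is effective; moreover the given generating words $X$ can be translated into words representing the same elements under $\psi$ (using first-order definability via \fullref{Proposition}{prop:firstorderreg} to match up representatives). Then \fullref{Theorem}{thm:subalgebralang} applies to the subalgebra $\mathcal{T}$ generated by $X\psi$: it guarantees not merely that the sublanguage $L$ of representatives of $\mathcal{T}$ is regular, but that an automaton recognizing $L$ can be \emph{effectively constructed} from the finite set of generating words. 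Once such an automaton is in hand, deciding whether a given input word $w$ lies in $L$ is a routine matter of running the automaton on $w$. Since $w\phi \in \mathcal{T}$ if and only if $w \in L$, this decides membership.

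The remaining case is when $\mathcal{S}$ is finite, for which \fullref{Theorem}{thm:allwords} does not directly apply. Here, however, the whole algebra has only finitely many elements, so any subalgebra is a finite set that can be computed directly by saturating $X\phi$ under the (finitely many) operations of $\mathcal{S}$ until no new elements appear; membership of $w\phi$ is then checked by inspection. Alternatively, one can observe that finiteness makes everything trivially decidable, so the substantive content lies entirely in the infinite case handled above.

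The main obstacle I anticipate is bookkeeping rather than mathematical: I must be careful that the construction in \fullref{Theorem}{thm:subalgebralang} is genuinely uniform and effective in the input data, namely the presentation and the finite generating set, and that the detection of termination promised there (checking $L_i = L_{i+1}$) really gives a halting algorithm. Since \fullref{Theorem}{thm:subalgebralang} already asserts effective constructibility of the automaton for $L$ and provides a halting criterion, the proof of \fullref{Theorem}{thm:membership} amounts to invoking that theorem and then testing $w \in L$, so no new difficulty arises beyond verifying that the hypotheses of the earlier theorem are met.
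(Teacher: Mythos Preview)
Your approach is correct and is essentially the same as the paper's: invoke \fullref{Theorem}{thm:subalgebralang} to effectively build an automaton for the language $L$ of representatives of the subalgebra, and then test whether $w \in L$. The paper's proof is more terse --- it applies \fullref{Theorem}{thm:subalgebralang} directly without separating out the finite case or explicitly discussing passage to an injective presentation --- but the substantive content is identical, and your extra bookkeeping does no harm.
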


\begin{proof}
By \fullref{Theorem}{thm:subalgebralang}, there is an algorithm that
takes the finite set of words $X$ and constructs the sublanguage $L$
of $a^*$ consisting of representatives of elements of the subalgebra
$\mathcal{T}$ generated by $X\phi$.  To decide whether $w\phi$ lies in
$\mathcal{T}$, it remains to check whether $w$ lies in $L$.
\end{proof}

Let $\mathcal{S}$ be a finitely generated algebra. Let
$f_1,\ldots,f_k$ be the operations of $\mathcal{S}$. Let $G_0$
be a finite generating set for $\mathcal{S}$. Inductively define the
following finite sets for all $i \in \nset$:
\[
G_i = G_{i-1} \cup \bigl\{(s_1,\ldots,s_{r_j})f_j : j \in \{1,\ldots,k\}, s_1,\ldots,s_{r_j} \in G_{i-1}, \text{$f_j$ has arity $r_j$}\bigr\}.
\]
Define $g : \nset^0 \to \nset^0$ by $n \mapsto |G_{n}|$. The function
$g$ is called the \defterm{growth level} of $\mathcal{S}$ with respect
to the generating set $G_0$. (This definition is taken from
\cite[\S~4]{khoussainov_autopres}.)

If $\mathcal{S}$ is FA-pre\-sent\-a\-ble, then there exist constants $s,a,b
\in \nset$ such that $g(n) \leq s^{a+1+bn}$ for all $n \in \nset^0$
\cite[Lemma~4.5]{khoussainov_autopres}.

\begin{proposition}
\label{prop:unaryalggrowth}
If $\mathcal{S}$ is a unary FA-pre\-sent\-a\-ble algebra, then there exist
constants $a,b \in \nset$ such that $g(n) \leq a+bn$ for all $n \in \nset^0$.
\end{proposition}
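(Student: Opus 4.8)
The plan is to leverage the diagrammatic machinery from the previous section, and in particular the linear structure of the language exposed by the proof of \fullref{Theorem}{thm:subalgebralang}. The key observation is that since $\mathcal{S}$ is unary FA-presentable, by \fullref{Theorem}{thm:allwords} we may assume the presentation is $(a^*,\phi)$ with every word a representative. The domain $S$ is thus laid out on the grid of $D$ rows and infinitely many columns, with at most $D$ elements per column. The crux is that the growth level $g(n)$ counts how many \emph{distinct} elements of $S$ can be reached from a fixed finite generating set $G_0$ in at most $n$ rounds of applying the operations. Because each operation is unary (arity $1$), a single application to a point in column $C[x]$ produces a point whose column index is governed by the shift maps $\tau_m$; the pumping/shift rules bound how far an operation can move a point.

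First I would make precise the statement that each unary operation $f$ moves points by a bounded number of columns, once the column index is large enough. Concretely, using \fullref{Shift rule}{shiftrule:up} and~\ref{shiftrule:down} applied to the binary relation $\Lambda(f,\phi) \subseteq (a^*)^2$, one shows there is a constant $c$ such that for any $u \in a^*$ with $c(u)$ sufficiently large, the image $uf$ satisfies $c(uf) \leq c(u) + c$: if $uf$ lay many columns to the right of $u$, the intervening empty columns would permit a shift that pumps $u$ forward while leaving $uf$ fixed relative to it, contradicting determinism of the image (or producing infinitely many images of a single input under a well-defined map). Taking the maximum such $c$ over the finitely many operations $f_1,\ldots,f_k$ yields a single constant $c$ that bounds the rightward reach of any single operation.

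Next I would track the column spread of $G_n$. Let $G_0$ be contained in columns $C[0,M]$ for some $M$. By the bound from the previous paragraph, applying any operation to an element of $G_{i-1}$ (which lies in columns $C[0, M + c(i-1)]$) produces an element in columns $C[0, M + ci]$. Hence by induction $G_n \subseteq C[0, M + cn]$. Since each column contains at most $D$ elements of $S$ (as there are exactly $D$ rows), we obtain
\[
g(n) = |G_n| \leq D\bigl(M + cn + 1\bigr) = (D(M+1)) + (Dc)n,
\]
which is of the required form $a + bn$ with $a = D(M+1)$ and $b = Dc$.

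The main obstacle I expect is establishing the uniform rightward-reach bound rigorously: one must rule out the possibility that a single operation sends a point arbitrarily far to the right as the input column grows. This is precisely where the shift rules earn their keep — the argument is that if $c(uf) - c(u)$ were unbounded, then for a suitable input there would be a run of empty columns strictly between $u$ and $uf$ (since only two components are involved and they cannot fill more than two columns), and applying \fullref{Shift rule}{shiftrule:down} to contract that gap, or \fullref{Shift rule}{shiftrule:up} to expand it, would yield two distinct images of the same argument under the operation $f$, contradicting that $f$ is a well-defined map. A symmetric issue arises if $f$ moves points far to the \emph{left}, but that only helps bound $g(n)$ from above and so is not needed for the inequality; only the rightward bound enters the column-spread estimate.
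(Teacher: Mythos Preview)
Your proposal rests on a misreading of the hypothesis: ``unary FA-presentable'' means the FA-presentation is over a one-letter alphabet, not that the operations of $\mathcal{S}$ have arity~$1$. You explicitly assume each operation $f$ is unary (so that $\Lambda(f,\phi)\subseteq (a^*)^2$ is a binary relation), and your rightward-reach bound is argued only in that setting. The proposition, however, must cover operations of arbitrary arity $r_j$, so as written the proof is incomplete.

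Fortunately the fix is minor and in fact simplifies the argument. For an operation $f_j$ of arity $r_j$ with inputs $u_1,\ldots,u_{r_j}\in C[0,m]$ and output $v$, the tuple $(u_1,\ldots,u_{r_j},v)\in\Lambda(f_j,\phi)$ has all its input components in columns $\leq m$; if $c(v)\geq m+2$ then column $C[m+1]$ contains no component, so \fullref{Shift rule}{shiftrule:up} shifts $v$ alone one column right, yielding $(u_1,\ldots,u_{r_j},v\tau_1)\in\Lambda(f_j,\phi)$ and hence $v\phi=(v\tau_1)\phi$, contradicting injectivity of $\phi$. Thus $c(v)\leq m+1$ always: your abstract constant $c$ is exactly~$1$, no ``sufficiently large column index'' caveat is needed, and by induction $G_n\subseteq C[0,x+n]\phi$ where $G_0\subseteq C[0,x]\phi$. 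This is precisely the paper's argument; once the arity confusion is removed, your approach coincides with it, only the paper obtains the sharp step bound~$1$ directly rather than via an intermediate constant, and it does not invoke \fullref{Theorem}{thm:subalgebralang} at all.
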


\begin{proof}
Let $(a^*,\phi)$ be an injective unary FA-pre\-sent\-a\-tion for
$\mathcal{A}$. Let $x \in \nset^0$ be such that $G_0 \subseteq
C[0,x]\phi$. The first aim is to prove, by induction, that $G_n
\subseteq C[0,x+n]\phi$ for all $n \in \nset^0$. This clearly
holds for $n=0$.

Let $f_j$ be an operation of $\mathcal{S}$ whose arity is $r_j$. Let
$u_1,\ldots,u_{r_j} \in G_i\phi^{-1} \subseteq C[0,x+n]$. Let $v \in
a^*$ be such that $(u_1\phi,\ldots,u_{r_j}\phi)f_j = v\phi$. Suppose
for \textit{reductio ad absurdum} that $c(v) > x+n+1$. Then no
component of the tuple $(u_1,\ldots,u_{r_j},v) \in \Lambda(f_j,\phi)$
lies in $C[x+n+1]$ and $v$ is the only component lying in
$C[x+n+2,\infty)$ and so, by \fullref{Shift rule}{shiftrule:up},
  $(u_1,\ldots,u_{r_j},v\tau_1) \in \Lambda(f_j,\phi)$. Hence $v\phi =
  (u_1\phi,\ldots,u_{r_j}\phi)f_j = (v\tau_1)\phi$, which contradicts
  the injectivity of $\phi$. Therefore $v \in C[0,x+n+1]$. Since
  $v\phi$ is the result of applying an arbitrary operation $f_j$ of
  $\mathcal{S}$ to arbitrary elements of $G_n$, it follows that
  $G_{n+1} \subseteq C[0,x+n+1]\phi$.

Let $a = (x+1)D$; then $|C[0,x]| = a$. Furthermore,
$|C[0,x+n+1]| = |C[0,x+n]| + D$. Let $b = D$; then
$|C[0,x+n+1]| = a + bn$. Since $G_n \subseteq
C[0,x+n+1]\phi$, it follows that $g(n) = |G_n| \leq a + bn$.
\end{proof}

The constrast between the growth levels of finitely generated
FA-pre\-sent\-a\-ble algebras (bounded by an exponential function) and
finitely generated \emph{unary} FA-pre\-sent\-a\-ble algebras (bounded
by a linear function) resembles the contrast between the growth of
finitely generated FA-pre\-sent\-a\-ble semigroups (polynomial growth
\cite[Theorem~7.4]{cort_apsg}) and finitely generated unary
FA-pre\-sent\-a\-ble semigroups (sublinear growth, which implies
finiteness \cite[Proof of Theorem~13]{crt_unaryfa}). Note, however,
the difference between the two types of growth: the growth level of an
algebra counts elements of a given term complexity, and the growth of
a semigroup or group counts elements of given word length.



\bibliography{\jobname_ext}
\bibliographystyle{alphaabbrv}

\end{document}